\newtheorem{theorem}{Theorem}[section]
\newtheorem{lemma}[theorem]{Lemma}
\newtheorem{proposition}[theorem]{Proposition}
\newtheorem{definition}[theorem]{Definition}
\newtheorem{remark}[subsection]{Remark}
\newtheorem{example}[subsection]{Example}
\newcommand{\eat}[1]{}
\numberwithin{equation}{section}
\title{Online Control For Adaptive Tapering Of Medications}
\author{Paula Gradu\footnote{EECS, UC Berkeley. E-mail: {\tt pgradu@berkeley.edu}}\; and Benjamin Recht\footnote{EECS, UC Berkeley. E-mail: {\tt brecht@berkeley.edu.}}}
\date{August 2023}
\begin{document}

\maketitle

\vspace{-0.3in}

\begin{abstract} \noindent We investigate adaptive protocols for the elimination or reduction of the use of medications or addictive substances. We formalize this problem as online optimization, minimizing the cumulative dose subject to constraints on well-being. We adapt a model of addiction from the psychology literature and show how it can be described by a class of linear time-invariant systems. For such systems, the optimal policy amounts to taking the smallest dose that maintains well-being. We derive a simple protocol based on integral control that requires no system identification, only needing approximate knowledge of the instantaneous dose response. This protocol is robust to model misspecification and is able to maintain an individual's well-being during the tapering process. Numerical experiments demonstrate that the adaptive protocol outperforms non-adaptive methods in terms of both maintenance of well-being and rate of dose reduction.
\end{abstract}


\section{Introduction}
Tapering medications and assisting cessation of addictive substances are related challenges in health care. Relapse rates are  high, and many return to drug use within weeks of entering treatment. Though no general protocols are equally valuable to all medications or all people, a personalized approach to cessation may allow caregivers flexibility to meet the diverse needs of a diverse care-seeking population. This paper investigates adaptive tapering protocols that enable individuals to self-regulate their cessation rate. In particular, we formalize tapering as an optimization problem of minimizing the cumulative dose subject to constraints on well-being. 

Such a formulation requires modeling an individual's dose-response dynamics, and we propose a mathematical formulation of the \emph{opponent process} model of addiction due to Solomon \cite{opponent_process}. As we discuss in Section~\ref{sec:related}, an opponent process consists of two competing systemic reactions to treatment. The first system governs the ``positive'' effect with a rapid onset and fast subsequent decay. The second system governs ``negative effects'' with a long latency and slow decay. In Section~\ref{sec:prelim}, we show that modeling opponent processes as LTI systems captures the qualitative behaviors specified in the psychology literature and allows us to analyze tapering as an optimal control problem.

In Section~\ref{sec:opt-result}, we first derive a greedy policy that solves the optimal control problem. The dosage should be decreased by as much as possible while maintaining constraints on well-being, but no planning is needed to compute the dose. We provide a robust controller that maintains this greedy behavior and is optimal under various models of possible exogenous disturbances.

In reality, the opponent process model is an approximation, and the particular model of an individual is unknown. A possible approach informed by control theoretic practice might involve learning a dynamics model of the individual, but such learning procedures require wide varying of inputs and would not be feasible for patients attempting drug cessation. Instead of leaning on system identification, we investigate the potential of ``model-free'' controllers for tapering in Section~\ref{sec:integral}. We analyze the potential of integral control, where the error signal is the deviation from a minimally acceptable level of well-being. Using methods from the online learning literature, we show that integral control robustly reduces the dosage while minimally violating constraints on well-being. Under further mild assumptions about the time between doses, we show that the integral controller monotonically reduces the dose to zero in finite time without violating the specified constraints. In the numerical experiments of Section~\ref{sec:experiments}, we demonstrate that our adaptive protocol outperforms non-adaptive methods in terms of both maintenance of well-being and rate of dose reduction.

\section{Related Work}\label{sec:related}
\paragraph{Tapering Protocols.} Our work attempts to synthesize adaptive tapering protocols that can be applied both for de-prescribing medications and for assisting cessation of addictive substances. In both of these applications, most studies of tapering protocols have focused on non-personalized substance-specific procedures. 

For example, for discontinuing SSRI treatment, Horowitz and Taylor \cite{horowitz2019tapering} claim that standard recommendations for tapering plans are too rapid, and better results are achieved if tapering is performed over multiple months than over weeks. They find that recommendations for linear dose reductions by constant amounts compounds these negative effects. To address this issue, they suggest that SSRI's be tapered via exponentially decreasing regimens, reducing the dose by multiplicative rather than additive factors. They also highlight the importance of individualisation of the process, despite there being no set approach for doing so. 

Similarly, withdrawal guidelines for benzodiazepines recommend dose reductions that are proportional to the present dose (most commonly 10\% reductions) per week, yielding exponentially decreasing regimens, as opposed to linear reductions.
Horowitz et al. \cite{antipsychotic_taper, antipsychotic_taper_schizo} propose a similar method (very slow, hyperbolic) for tapering of antipsychotic medication. This work is based on case studies and notes that there is no standard guideline for tapering antipsychotic medications. 

Several studies have also investigated protocols for managing withdrawal from addictive substances. In a meta-analysis of opiod tapering protocols, Berna et al. \cite{opiod_taper_noncancer} found that longer tapers are typically better. The survey by Fenton et al. \cite{opiod_taper_trends} argues that most tapering protocols which focus primarily on getting through the acute withdrawal phase may be too rapid and have negative mental health consequences. In an observational study, Agnoli et al. \cite{opiod_taper_overdose} find that tapering of opiods is significantly associated with increased risk of overdose and mental health crisis. Henry et al.
 \cite{opiod_taper_focus_group} attempt to characterize patients' subjective experiences with opiod tapering in an effort to minimize negative tapering reactions.

\paragraph{Models of Drug Response and Tolerance.} In the present work, we will build upon a popular model of addiction, proposed by Solomon \cite{opponent_process}, called the Opponent-Process theory of acquired motivation. In this model, the drug response is the result of an initial `positive' effect with short lag and fast decay (A process) followed by a counter `negative' effect with high latency and slow decay (B process). This formalizes the empirical qualitative observation that the withdrawal symptoms of a drug are characterized as opposite to its acute effects \cite{kalant1973biological}. The shape of such a response is plotted in Figure~\ref{fig:opponent}.
\begin{figure}[H]
\centering
\includegraphics[width=0.8\linewidth]{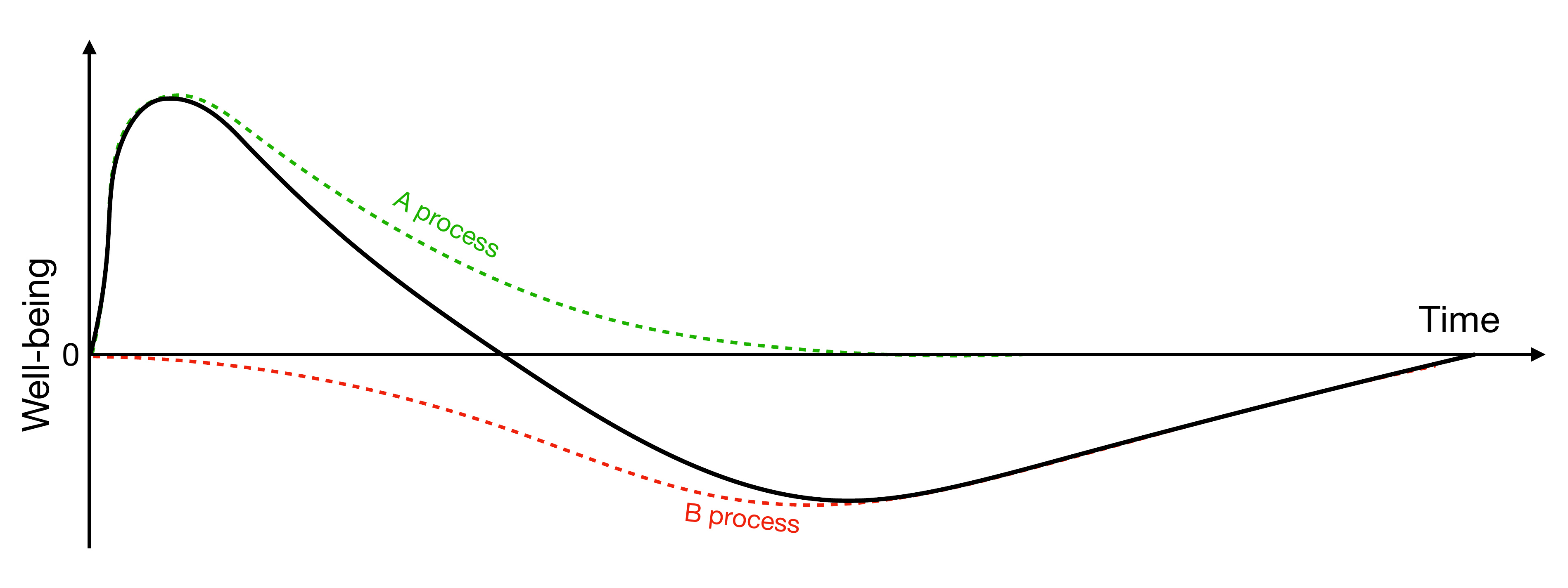}
\caption{Shape of an Instantaneous Opponent Process response as described by Solomon \cite{opponent_process}.}
\label{fig:opponent}
\end{figure}

\noindent Koob and Moal \cite{allostatic} and Koob \cite{koob_addiction} extend the opponent process framework to account for a chronic deviation of the regulatory system from baseline under repeated administration of the drug, which is referred to as allostasis and is illustrated in Figure~\ref{fig:opponent-blunt}. In this work, we propose a simple mathematical formulation of the opponent process that captures the salient aspects of this model. 

\begin{figure}[H]
\includegraphics[width=0.9\linewidth]{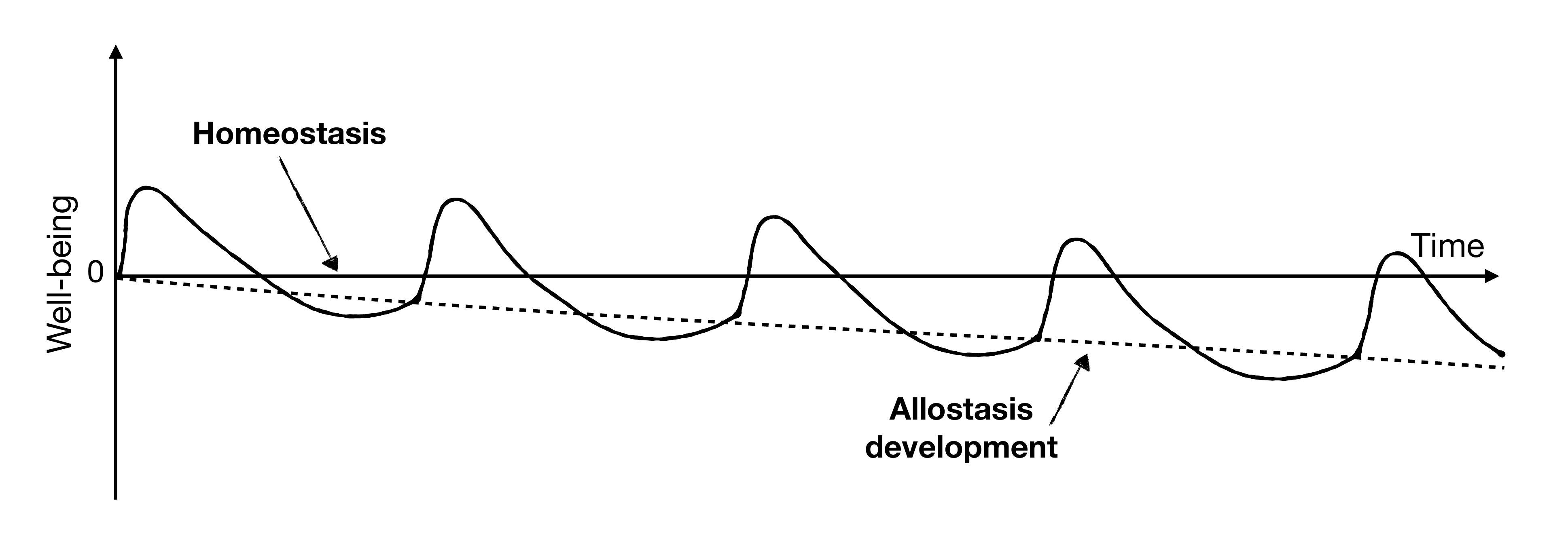}
\caption{Allostasis development due to overlay of opponent processes over time as described by Koob and Moal \cite{allostatic}.}
\label{fig:opponent-blunt}
\end{figure}

\section{Problem Setting and Preliminaries}\label{sec:prelim}

To quantify a substance-agnostic drug response, we will denote an individual's \emph{well-being} at any time $t$ as $y_t$ and the \emph{dose} of a drug taken as $u_t$. This formulation asserts that $y_t$ and $u_t$ are concrete numerical quantities. For $u_t$ this is natural as nearly all prescription/tapering schemes specify dosages in terms of raw active compound weight. For $y_t$, our methods can be instantiated with a wide array of imprecise quantifications of well-being. For example, for tapering anti-depressant medication, $y_t$ could be a weekly score on the Patient Health Questionnare (PHQ-9) \cite{kroenke2002phq}. If we want to instead directly measure withdrawal, we could for example have $y_t$ be the negative of the number of Discontinuation-Emergent Signs and Symptoms (DESS)~\cite{rosenbaum1998selective}. 
\footnote{Since our paper is framed in terms of maintaining `well-being,' we need to take the negative of measurements of withdrawal intensity. The paper's results can all be restated in terms of maintaining withdrawal intensity below a desired individual-specified threshold via a simple sign flip.} 

We assume that the well-being $y_t$, given a history of doses, evolves according to a linear time-invariant (LTI) system
\begin{equation}\label{eq:dynamics}
y_{t+1} = \sum_{k=0}^{t} g(k) u_{t-k} + y^\mathrm{nat}_{t+1}
\end{equation}
where $g$ is the \emph{impulse response} of the drug's effect on well-being, and $y^\mathrm{nat}$ comprises the \emph{natural progression} of the remaining dynamics that cannot be attributed to the recorded drug intake. We focus on linear responses as they are sufficiently general to capture much of the qualitative behavior observed in the psychology literature, and discuss extensions to nonlinear systems in Appendix~\ref{app:nonlin_gen}. Additionally, we allow for arbitrary deviations from the LTI dynamics through $y^\mathrm{nat}$: any measurement noise, exogenous disturbance, and/or effects of doses taken before the time tapering protocol is adopted (set by convention to $t=0$) are implicitly represented through this term. 

Broadly, the \emph{goal of tapering} is to minimize the consumption of a drug while keeping well-being above some specified threshold $y_\mathrm{min}$. We can directly formalize this as an online optimization problem: over some \emph{tapering horizon} $T$, we aim to minimize the \emph{cumulative dose} $\sum_{t=0}^{T-1} u_t$, while satisfying the \emph{constraint} that $y_t \geq y_\mathrm{min}$ for all $t=\overline{1,T}$.    

\paragraph{Modeling Opponent Processes.}We now turn to giving a mathematical characterization of an opponent processes within our control-theoretic framework. As a reminder, Solomon \cite{opponent_process} asserts that an opponent process reaction to a single dose is first an A-process where $y>0$ and then a following B-process where $y < 0$. We summarize this property in Definition~\ref{def:opp_process} below.

\begin{definition}[Opponent Process]\label{def:opp_process} We say that $g$ is an opponent process if there exists a time $\tau_0$ such that $g(\tau) > 0$ when  $\tau<\tau_0$ and $g(\tau)\leq 0$ if $\tau\geq \tau_0$.
\end{definition}

In this work, we care about opponent processes that both lead to the development of allostasis (i.e., tolerance/addiction) \textit{and} are sufficiently well-behaved so that tapering is possible.  In Definition~\ref{def:well_behaved_opp_process} below, we specify a general class of systems that satisfy these desiderata. 

\begin{definition}[Linearly Progressing Opponent Process]\label{def:well_behaved_opp_process} An opponent process $g$ is a \emph{Linearly Progressing Opponent Process} (LPOP) if there additionally exists an $\alpha \in [0,1)$ such that 
\begin{align*}
    g(t+1) &\leq \alpha \cdot g(t)~~~\,\text{for}~~t <\tau_0 -1,\\
    |g(t+1)| &\geq \alpha \cdot |g(t)|~~\text{for}~~t\geq \tau_0\,.
\end{align*}
\end{definition}

This definition quantifies opponent processes where the A-process decays exponentially, and the B-process does not decay as quickly as the A-process. Qualitatively, 
Definition~\ref{def:well_behaved_opp_process} models a response where the positive effects decay at some rate $\alpha$, then become negative, trending towards some peak of withdrawal, and finally decay back to $0$, potentially at a prolonged rate.

We note that this always happens for opponent processes with $\tau_0=1$ since we can then take $\alpha = 0$. This observation is useful in the sequel and we flag it here specifically as a remark.

\begin{remark}\label{rmk:inst_opp}{\normalfont Any opponent process $g$ with $\tau_0=1$ is an LPOP.}
\end{remark}

\begin{example}{\normalfont Consider a linear system with impulse response
$$
    g(t) = \sum_\lambda c_\lambda \lambda^t
$$
where all of the $\lambda \in [0,1)$ and the $c_\lambda$ are real valued scalars. The terms where $c_\lambda$ are positive correspond to effects that increase well-being. The terms where $c_\lambda$ are negative decrease well-being. With this in mind, define $\Lambda_{+} = \{\lambda ~:~ c_\lambda \geq 0\}$ and $\Lambda_{-} = \{\lambda ~:~ c_\lambda < 0\}$. Then, the system is an LPOP if the following three conditions hold:
\begin{enumerate}
\item    $\max_{\lambda \in \Lambda_{+}} \lambda  \leq \min_{\lambda' \in \Lambda_{-}} \lambda'$
\item   $\sum_\lambda c_\lambda>0$
\item There exists some $\tau_0$ s.t. $g(\tau_0) \leq 0$
\end{enumerate}}
\end{example}

Intuitively, this holds because the part of the system corresponding to positive effects decay more rapidly than all of the terms corresponding to negative effects. In particular, the second condition implies an initial positive response, and third condition implies that after a single dose, the well-being will eventually be negative.

The formal argument of why such systems are LPOPs proceeds by bounding $g(t+1)$ in terms of $g(t)$. If we let $\Lambda_{++} := \max_{\lambda \in \Lambda_{+}} \lambda$ and $\Lambda_{--}:= \min_{\lambda' \in \Lambda_{-}} \lambda'$, then we have

\begin{align*}
g(t+1) = \sum_\lambda c_\lambda \lambda^{t+1} &= \sum_{\lambda_+ \in \Lambda_+} c_{\lambda_+} \lambda_+^{t+1} + \sum_{\lambda_- \in \Lambda_-} c_{\lambda_-} \lambda_-^{t+1} \\
&\leq \Lambda_{++} \cdot \sum_{\lambda_+ \in \Lambda_+} c_{\lambda_+} \lambda_+^{t} + \Lambda_{--} \cdot \sum_{\lambda_- \in \Lambda_-} c_{\lambda_-} \lambda_-^{t} \\
&\leq \Lambda_{--} \left(\sum_{\lambda_+ \in \Lambda_+} c_{\lambda_+} \lambda_+^{t} + \sum_{\lambda_- \in \Lambda_-} c_{\lambda_-} \lambda_-^{t} \right) \\
&=  {\Lambda_{--}} g(t)
\end{align*}
Setting $\alpha\doteq \Lambda_{--}$, the above implies that for $t < \tau_0 - 1$, $g(t+1) \leq \alpha g(t)$ and for $t \geq \tau_0$, $|g(t+1)| \geq \alpha |g(t)|$. Hence $g$ satisfies Definition~\ref{def:well_behaved_opp_process}. \\

In Figure~\ref{fig:drug_behavior} we plot the dose response and the effect deterioration observed when taking a fixed dose for four examples of 2-pole LTI systems, each capturing an LPOP with different time constants and relative effect magnitudes. 


\begin{figure}[H]
\centering
\includegraphics[width=\linewidth]{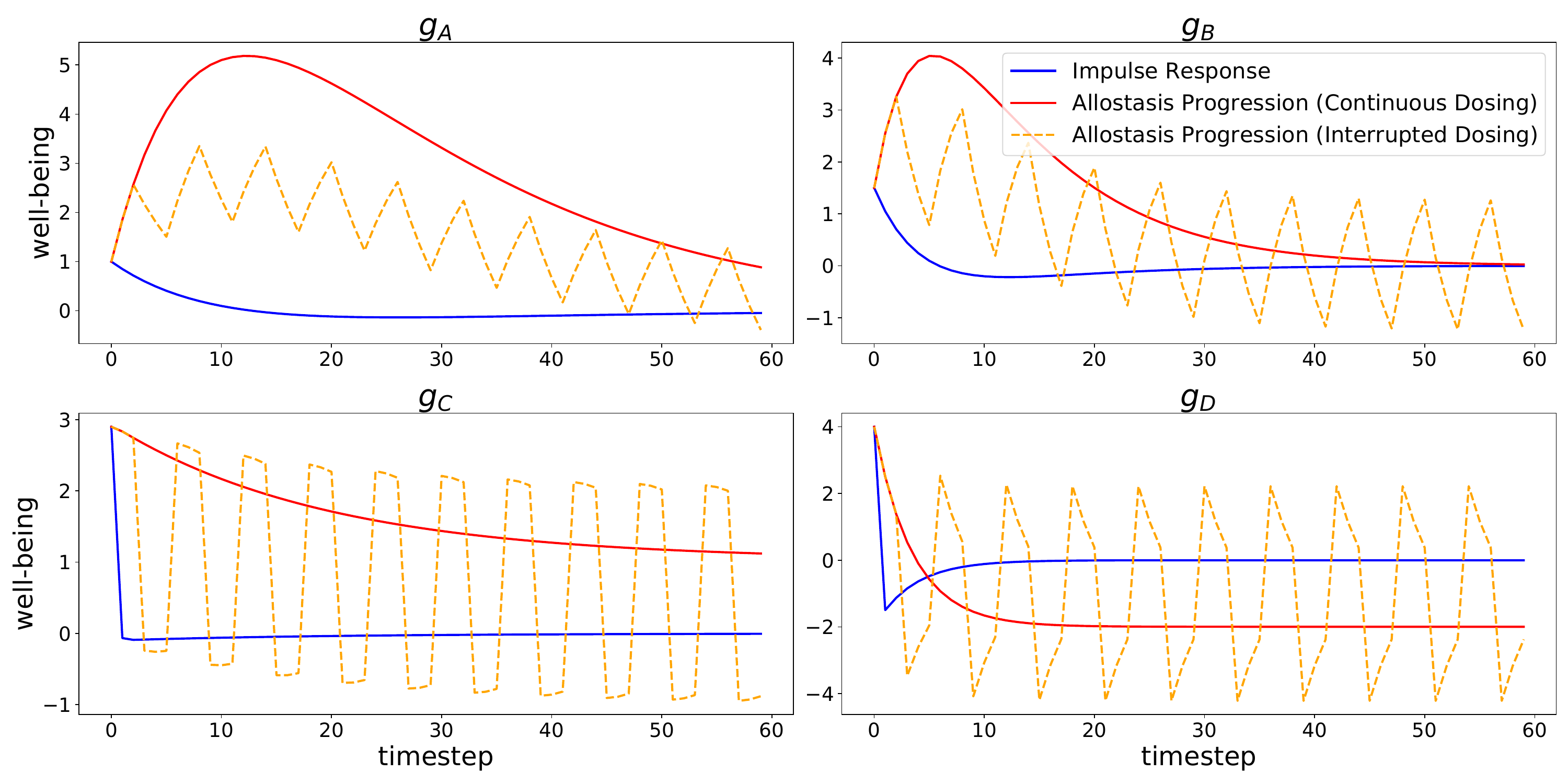}
\caption{Impulse responses and allostasis progression over $60$ timesteps for four opponent processes. The top panels illustrate opponent processes with slowly accumulating benefits and tolerance, characteristic of many therapeutic medications. The bottom panels capture opponent processes with immediate transient effects, and either subtle yet slowly accumulating negative effects (left) or a rapid onset of very strong negative effects (right), the latter being more characteristic of substances of abuse. The well-being is measured in arbitrary units which could be scaled to fit a particular metric of interest.}
\label{fig:drug_behavior}
\end{figure}

\section{Optimal Tapering With Full System Knowledge}\label{sec:opt-result}

In this section, we derive the optimal tapering protocol when \textit{all} the quantities described in the previous section are known. In particular, we show that the optimal solution to the tapering control problem formalized above is achieved by taking the dose that induces exact constraint matching, which is equivalent to setpoint matching with setpoint $y_\mathrm{min}$. While not directly actionable, this insight provides a path forward towards designing new adaptive tapering protocols with stronger individual guarantees, as we will see in Section~\ref{sec:integral}.

We begin with the formal definition of the \emph{minimum effective dose}, the smallest dose that does not lead to an immediate violation of the constraint on well-being. The main theorem of this section states that taking the minimum effective dose at every step ensures perfect constraint satisfaction with minimal cumulative dose over the entire tapering horizon . 

\begin{definition}[Minimum Effective Dose]\label{def:greedy_dose} We say that a dose $u_t$ is the \emph{minimum effective dose (MED)} at time $t$ if it either leads to $y_{t+1} = y_\mathrm{min}$ or is equal to $0$.
\end{definition}

\begin{theorem}\label{thm:max_greedy_opt} For any LPOP $g$ and any natural progression $y^\mathrm{nat}$, taking the MED at every time ensures $y_t \geq y_\mathrm{min}$ for all $t$ with minimal cumulative dose. \end{theorem}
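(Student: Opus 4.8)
The plan is to split the statement into feasibility and optimality, the latter being the real content. Feasibility is immediate from Definition~\ref{def:greedy_dose}. Writing the dynamics \eqref{eq:dynamics} as $y_{t+1} = g(0)u_t + r_t$, where $r_t := \sum_{k=1}^{t} g(k)u_{t-k} + y^\mathrm{nat}_{t+1}$ collects everything independent of the current dose, and using that $g(0)>0$ for any opponent process (since $g(\tau)>0$ for $\tau<\tau_0$ and $\tau_0\ge 1$), the MED is the nonnegative dose $u_t = \max\{0,(y_\mathrm{min}-r_t)/g(0)\}$, which always produces $y_{t+1}=\max\{r_t,y_\mathrm{min}\}\ge y_\mathrm{min}$. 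Thus the MED policy is feasible by construction, and it satisfies the complementary-slackness structure $u_t^\star>0 \Rightarrow y_{t+1}^\star = y_\mathrm{min}$, which I will exploit below. I assume throughout that doses are nonnegative.

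The enabling observation for optimality is that the two-sided inequalities of Definition~\ref{def:well_behaved_opp_process} collapse into the single uniform inequality $g(t+1)\le \alpha\, g(t)$ for \emph{every} $t\ge 0$. For $t<\tau_0-1$ this is the A-process condition verbatim; for $t\ge\tau_0$ it follows from $|g(t+1)|\ge\alpha|g(t)|$ because both values are nonpositive; and the boundary case $t=\tau_0-1$ holds trivially since $g(\tau_0)\le 0\le\alpha\,g(\tau_0-1)$. Equivalently, $c_k := \alpha g(k)-g(k+1)\ge 0$ for all $k$, a fact I will use repeatedly.

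To prove optimality I will exhibit a nonnegative dual certificate. Let $A=\{t:u_t^\star>0\}$ be the active steps and $F$ its complement. I will construct multipliers $\lambda_t\ge 0$ supported on $A$ such that the aggregated response $P_m := \sum_{t\in A,\,t\ge m}\lambda_t\,g(t-m)$ satisfies $P_m=1$ for $m\in A$ and $P_m\le 1$ for $m\in F$. Given any feasible $u$, set $\delta_t:=u_t-u_t^\star$; then on active steps feasibility and the slackness structure give $y_{t+1}-y_\mathrm{min}=\sum_{k}g(k)\delta_{t-k}\ge 0$, while on free steps $\delta_t=u_t\ge 0$. Multiplying the active-step inequalities by $\lambda_t$ and reordering the double sum (substituting $m=t-k$) yields
\[
0 \le \sum_{t\in A}\lambda_t\bigl(y_{t+1}-y_\mathrm{min}\bigr) = \sum_{m}\delta_m P_m = \sum_{m\in A}\delta_m + \sum_{m\in F}\delta_m P_m,
\]
and hence $\sum_m\delta_m \ge \sum_{m\in F}\delta_m(1-P_m)\ge 0$, which is exactly $\sum_t u_t \ge \sum_t u_t^\star$.

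The main obstacle --- and where the LPOP hypothesis is essential --- is showing that such nonnegative multipliers exist. I will define them by backward recursion on $t$: set the largest active multiplier to $1/g(0)$, and at each earlier active time solve $P_m=1$ for $\lambda_m$. The key identity is
\[
P_m - \alpha P_{m+1} = [m\in A]\,\lambda_m\, g(0) \;-\; \sum_{t\in A,\, t>m}\lambda_t\, c_{t-m-1},
\]
where $[m\in A]$ equals $1$ if $m\in A$ and $0$ otherwise, and the $c_k$ are the nonnegative quantities above. Running the recursion downward under the inductive hypotheses $\lambda_t\ge 0$ and $P_{m+1}\le 1$, the active case gives $\lambda_m g(0) = 1-\alpha P_{m+1}+\sum_{t>m}\lambda_t c_{t-m-1}\ge 1-\alpha>0$, using $\alpha P_{m+1}\le\alpha<1$; hence $\lambda_m>0$ and $P_m=1$. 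The free case gives $P_m\le\alpha P_{m+1}<1$. This simultaneously establishes nonnegativity of the $\lambda_t$ and the bound $P_m\le 1$ on free steps, completing the certificate. The remaining work is routine: verifying the base case at the tapering horizon and the bookkeeping of finite-horizon boundary terms.
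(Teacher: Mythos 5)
Your proof is correct, but it takes a genuinely different route from the paper's. The paper argues by a local exchange: assuming an optimal schedule fails to be the MED at some time $t_0$, it shifts $\epsilon$ of dose off time $t_0$ and adds $\alpha\epsilon$ back at time $t_0+1$; the collapsed LPOP inequality $g(t+1)\le\alpha\,g(t)$ guarantees all later well-being values can only increase, while the cumulative dose drops by $(1-\alpha)\epsilon$ --- a contradiction in three lines. You instead build a global LP-duality certificate: nonnegative multipliers on the active (dose-positive, constraint-tight) steps, constructed by backward recursion, whose aggregated response $P_m$ equals $1$ on active steps and is at most $1$ on free steps, yielding $\sum_t u_t\ge\sum_t u_t^\star$ against \emph{every} feasible competitor by weak duality. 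Both arguments hinge on exactly the same structural fact, which you isolate cleanly as $c_k=\alpha g(k)-g(k+1)\ge 0$ for all $k$ (the paper uses it implicitly in the line $-g(t-t_0)\epsilon+g(t-t_0-1)\alpha\epsilon\ge 0$). What your approach buys: it certifies optimality of the MED sequence directly, without presupposing that a minimizer exists and then showing it must be greedy (a small logical gap the paper's contradiction argument leaves open, resolved only because the MED-at-every-step sequence is unique); it also quantifies the suboptimality of any competitor as $\sum_{m\in F}\delta_m(1-P_m)$. What it costs is considerable extra machinery --- the exchange argument is far shorter and extends immediately to time-varying constraints and the nonlinear setting of the appendix, whereas your recursion would need to be redone there. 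Your proof as stated is complete modulo the finite-horizon bookkeeping you flag, which is indeed routine.
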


\begin{proof} Consider a dosing sequence $\{u^\star_t\}_{t=0}^{T-1}$ that maintains $y_t^\star \geq y_\mathrm{min}$ for all $t$ with minimal cumulative dose. We will prove the theorem by contradiction: assume there exists a time $t_{0}$ when $u_{t_0}^\star$ is not the MED. Then, there must exist $\epsilon > 0$ such that replacing $u_{t_{0}}^\star$ with $u_{t_{0}}^\star - \epsilon$ would result in a slightly lower well-being measurement $y_{t_0+1}' < y_{t_0+1}^\star$ that still satisfies the constraint $y_{t_0+1}' \geq y_\mathrm{min}$. We can then consider the following modified dosing schedule $\{u_t'\}_{t=0}^{T-1}$:
$$u_t'= \begin{cases} u_{t_0}^\star - \epsilon & t=t_0 \\ u_{t_0+1}^\star + \alpha \epsilon & t=t_0+1 \\ u_t^\star & \text{ otherwise}\end{cases}$$
where $\alpha$ is the LPOP's separating decay rate constant from Definition~\ref{def:well_behaved_opp_process}. Let $\{y_t'\}_{t=1}^T$ be the well-being sequence resulting from this modified dosing schedule. 

Since $u_t' = u_t^\star$ for all $t < t_0$, we have that $y_t' = y_t^\star \geq y_\mathrm{min}$ for all $t \leq t_0$. Plugging in the effect of the dose modifications at $t_0$ and $t_0 + 1$ into the dynamics equation \eqref{eq:dynamics}, we get:
\begin{align*}
y_{t+1}' &= y_{t+1}^\star - g(t-t_0) \epsilon + g(t-t_0-1) \alpha \epsilon \\
&\geq y_{t+1}^\star \\
&\geq y_\mathrm{min}
\end{align*}
where the first inequality follows by the definition of an LPOP (Definition~\ref{def:well_behaved_opp_process}). Since this holds for all $t > t_0$, we have that $y_t'\geq y_\mathrm{min}$ for all $t > t_0 + 1$ as well.

Therefore the well-being sequence $\{y_t'\}_{t=1}^T$ also satisfies the well-being constraints for all $t$. However, since $\alpha < 1$, the modified dosing schedule $\{u_t'\}_{t=0}^{T-1}$ that lead to it has strictly smaller cumulative dose than $\{u_t^\star\}_{t=0}^{T-1}$:
$$\sum_{t=0}^{T-1} u_t'=\sum_{t=0}^{T-1} u_t^\star - \epsilon + \alpha \epsilon < \sum_{t=0}^{T-1} u_t^\star\,.$$
This is a contradiction, completing the proof.
\end{proof}

This theorem shows that long-term planning is not necessary to optimally taper a medication. It always suffices to take the smallest dose that ensures satisfaction of just the very next well-being constraint. Given this insight, the main question becomes how to compute the MED.

Let us first assume that we know the full specification of the impulse response $g$, and that we are given (or can iteratively construct) a sequence $\{y_t^\mathrm{nat, lb}\}_{t=1}^T$ that lower bounds all potential underlying natural progressions $y^\mathrm{nat}$, i.e. 
\begin{equation}\label{eq:lb_admissible}
y^\mathrm{nat}_t \geq y^\mathrm{nat, lb}_t \text{ for all } t.\tag{*}  
\end{equation}
With this information we can directly derive the MED, and therefore an explicit-form protocol given below in Proposition~\ref{thm:opt_ynat}.
\begin{proposition}\label{thm:opt_ynat} For any LPOP $g$, taking the dose
\begin{equation}\label{eq:opt_ynat}
u_t = \max\left\{0, \frac{y_\mathrm{min} - y_{t+1}^\mathrm{nat, lb} - \sum_{k=1}^t g(k) u_{t-k}}{g(0)}\right\}
\end{equation} 
ensures $y_t\geq y_\mathrm{min}$ for all $t$ on all natural progressions satisfying ($\star$) with minimal cumulative dose.
\end{proposition}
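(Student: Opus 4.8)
The plan is to reduce the statement to Theorem~\ref{thm:max_greedy_opt} by recognizing that the explicit formula \eqref{eq:opt_ynat} computes the MED for the \emph{worst-case} admissible natural progression, namely $y^\mathrm{nat}=y^\mathrm{nat, lb}$. First I would record that $g(0)>0$ for any opponent process, since the A-process assumption forces $\tau_0\geq 1$ and hence $g(0)>0$; this makes dividing by $g(0)$ legitimate and guarantees the clipped expression genuinely returns the smallest nonnegative dose. The sequence $\{u_t\}$ defined by \eqref{eq:opt_ynat} is well defined by forward recursion, as $u_t$ references only the earlier doses $u_{t-1},\dots,u_0$.

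Next I would establish feasibility. Introduce the nominal trajectory $\tilde y_{t+1}=\sum_{k=0}^t g(k)u_{t-k}+y^\mathrm{nat, lb}_{t+1}$ obtained by running the protocol against the lower-bound progression. Reading off \eqref{eq:opt_ynat}, whenever the maximum is attained by its second argument one has $g(0)u_t=y_\mathrm{min}-y^\mathrm{nat, lb}_{t+1}-\sum_{k=1}^t g(k)u_{t-k}$, i.e. $\tilde y_{t+1}=y_\mathrm{min}$; and whenever $u_t=0$ the second argument is nonpositive, which rearranges to $\tilde y_{t+1}\geq y_\mathrm{min}$. Hence $\tilde y_{t+1}\geq y_\mathrm{min}$ at every step, with equality exactly when a positive dose is taken. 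For any true progression obeying ($\star$), the realized well-being satisfies $y_{t+1}=\tilde y_{t+1}+(y^\mathrm{nat}_{t+1}-y^\mathrm{nat, lb}_{t+1})\geq \tilde y_{t+1}\geq y_\mathrm{min}$, so the constraints hold on all admissible progressions. This half is routine.

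The substance is optimality, which I would obtain by a sandwiching argument. By construction the computed sequence $u^\dagger$ is precisely the MED sequence (Definition~\ref{def:greedy_dose}) for the system whose natural progression equals $y^\mathrm{nat, lb}$: at each step it either drives $\tilde y_{t+1}$ exactly to $y_\mathrm{min}$ or is zero. Applying Theorem~\ref{thm:max_greedy_opt} to that fixed progression shows $u^\dagger$ has minimal cumulative dose among all dose sequences feasible for $y^\mathrm{nat, lb}$; call this feasible set $\mathcal F_\mathrm{lb}$, and let $\mathcal F_\mathrm{all}$ be the sequences feasible for every progression obeying ($\star$). Since $y^\mathrm{nat, lb}$ itself obeys ($\star$), we have $\mathcal F_\mathrm{all}\subseteq \mathcal F_\mathrm{lb}$, so minimizing the cumulative dose over the smaller set $\mathcal F_\mathrm{all}$ can only raise the optimum; combined with the feasibility half, which gives $u^\dagger\in\mathcal F_\mathrm{all}$, this pins the robust optimum at $\sum_t u^\dagger_t$ and exhibits $u^\dagger$ as a minimizer.

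The step I expect to be most delicate is making the comparison class precise. The cleanest reading is that ``minimal cumulative dose'' ranges over fixed (open-loop) dose sequences that are robustly feasible, whereupon the inclusion $\mathcal F_\mathrm{all}\subseteq\mathcal F_\mathrm{lb}$ does the work. One should also check that adaptivity buys nothing: because $y^\mathrm{nat}_{t+1}$ is unconstrained above $y^\mathrm{nat, lb}_{t+1}$ and unobserved at the moment $u_t$ is committed, any robustly feasible policy must cover the worst case $y^\mathrm{nat}_{t+1}=y^\mathrm{nat, lb}_{t+1}$ at each step, so its dose can never fall below \eqref{eq:opt_ynat}. This recovers the same lower bound without invoking the set inclusion and confirms there is no gain from closed-loop behavior here.
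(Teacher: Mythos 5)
Your proof is correct and follows essentially the same route as the paper's: feasibility via monotonicity of the realized well-being in the natural progression (the dose sequence is determined by $y^\mathrm{nat,lb}$ alone, so the true trajectory dominates the nominal one), and optimality by recognizing \eqref{eq:opt_ynat} as the MED sequence for the lower-bound progression and invoking Theorem~\ref{thm:max_greedy_opt} on that instance, since any robustly feasible sequence must in particular be feasible there. Your explicit treatment of the comparison class ($\mathcal{F}_\mathrm{all}\subseteq\mathcal{F}_\mathrm{lb}$) and of why adaptivity buys nothing is slightly more careful than the paper's phrasing, but it is the same argument.
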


\begin{proof} First we show that the doses $u_t$ prescribed by (\ref{eq:opt_ynat}) maintain $y_t\geq y_\mathrm{min}$ for all $t$ under any natural progression satisfying ($\star$). To do so, we start by deriving the MED at an arbitrary time $t_0$ for a particular instantiation $\{\tilde{y}_t^\mathrm{nat}\}_{t=1}^T$. Given any past doses $\{u_t\}_{t=0}^{t_0-1}$, the MED is equal to
\begin{equation}\label{eq:opt}
\tilde{u}_{t_0}^\mathrm{MED} = \max\left\{0, \frac{y_\mathrm{min} - \tilde{y}_{t_0+1}^\mathrm{nat} - \sum_{k=1}^{t_0} g(k) u_{t_0-k}}{g(0)}\right\}
\end{equation}
To see why, let $\tilde{y}_{t_0+1}$ denote the resulting well-being, and observe that the dose is either $0$ or satisfies
$$g(0) \, \tilde{u}_{t_0}^\mathrm{MED} + \sum_{k=1}^{t_0} g(k) \, u_{t_0-k} + \tilde{y}_{t_0+1}^\mathrm{nat} = y_\mathrm{min} $$

By the dynamics equation \eqref{eq:dynamics}, the left hand side is precisely equal to $\tilde{y}_{t_0+1}$. This means that if $\tilde{u}_{t_0}^\mathrm{MED}$ is not equal to $0$ then it results in a next iterate well-being measurement that exactly matches the constraint, which is precisely the definition of the MED. 

Because, having fixed past doses, \eqref{eq:opt} is solely determined by $\tilde{y}_{t_0+1}^\mathrm{nat}$ and is decreasing with respect to it, we have that $u_{t_0} \geq \tilde{u}_{t_0}^\mathrm{MED}$ and therefore $y_{t_0+1} \geq \tilde{y}_{t_0+1}\geq y_\mathrm{min}$ for any $\tilde{y}_{t_0}^\mathrm{nat} \geq y_{t_0}^\mathrm{nat,lb}$. Since this holds independently for all $t_0$ (irrespective of past doses), we have that \eqref{eq:opt_ynat} maintains $y_t\geq y_\mathrm{min}$ for all $t$ and any natural progression satisfying ($\star$), as promised.

To see that it achieves the above with minimal cumulative dose, observe that at every time $t$ the dose $u_t$ prescribed is \textit{exactly} the MED corresponding to setting $\tilde{y}_{t+1}^\mathrm{nat}$ to its lowest instantiation ($y_{t+1}^\mathrm{nat, lb}$) in \eqref{eq:opt}. Therefore, Theorem~\ref{thm:max_greedy_opt} implies that any dosing sequence with a strictly lower cumulative dose will eventually lead to $y_t < y_\mathrm{min}$ on the natural progression which exactly matches the lower bound in ($\star$). This means that the proposed protocol \eqref{eq:opt_ynat} ensures $y_t\geq y_\mathrm{min}$ for all $t$ on all natural progressions satisfying ($\star$) with minimal cumulative dose, concluding the proof. 
\end{proof}

There are two natural approaches to lower bound $y^\mathrm{nat}_{t+1}$ at time $t$ in order to compute $u_t$ prescribed by \eqref{eq:opt_ynat}. First, we may assume that $y^\mathrm{nat}_{t+1} \geq y^\mathrm{nat}_t$ for all $t$. This would correspond to assuming that the underlying natural progression is monotonically non-decreasing. In this case, the best lower bound candidate is $y^\mathrm{nat, lb}_{t+1} = y^\mathrm{nat}_{t}$ which we can compute via 
\begin{equation}\label{eq:ynat_form}
y^\mathrm{nat}_t = y_t - \sum_{k=0}^{t-1} g(k) u_{t-k-1}\,.
\end{equation}
Second, we could assume that there exists a constant $L_\mathrm{nat} \geq 0$ such that $y^\mathrm{nat}_{t+1} \geq y^\mathrm{nat}_t - L_\mathrm{nat}$ for all $t$. This would correspond to assuming that the underlying natural progression cannot trend downwards in a single step at a rate faster than $L_\mathrm{nat}$. In this case, the best lower bound candidate is $y^\mathrm{nat, lb}_{t+1} = y^\mathrm{nat}_{t} - L_\mathrm{nat}$, where we again compute $y^\mathrm{nat}_{t}$ via \eqref{eq:ynat_form}.\\

Finally, one may wonder if the above principle for optimal tapering extends to more general settings. A simple yet important scenario is when the constraints are time-varying. This is relevant when a person can tolerate more intense withdrawal effects at particular times of their lives or when they can no longer tolerate an initially agreed-upon baseline. Remark~\ref{rmk:opt_tv} shows that optimal tapering is still possible with a time-varying lower bound and becomes equivalent to constraint tracking in this case.

\begin{remark}\label{rmk:opt_tv}{\normalfont The proof of Theorem~\ref{thm:max_greedy_opt} is valid even if the well-being constraints are time-varying. Therefore, optimal tapering is again achieved by taking the MED at every time $t$ with respect to the very next well-being constraint $y_\mathrm{min}^{(t+1)}$. Furthermore, the protocol from Proposition~\ref{thm:opt_ynat} can be directly extended to this setting by replacing $y_\mathrm{min}$ in \eqref{eq:opt_ynat} with its time-varying counterpart $y_\mathrm{min}^{(t)}$.}
\end{remark}

The above remark is still true even if the dose responses $g$ are time-varying. Beyond being able to encompass non-linear affine systems, time-varying opponent processes cover situations when there is some process other than dependency-induced allostasis altering the dose-response over time. Going even further, in Appendix~\ref{app:nonlin_gen} we consider non-linear opponent processes and show that the main results of this section still hold under an appropriate generalization of Definition~\ref{def:well_behaved_opp_process}. Finally, note that by Remark~\ref{rmk:inst_opp} (and its extension to non-linear systems in Appendix~\ref{app:nonlin_gen}), \textit{any} opponent process discretized to have $\tau_0 = 1$ satisfies the well-behavedness assumption and therefore can be optimally tapered by taking the MED at every time. The MED is optimal under surprising generality.

\section{Tapering By Integral Control}\label{sec:integral}

We saw in the previous section that optimal tapering is achieved by always taking the smallest dose that satisfies the next well-being constraint. However, computing this dose required knowledge of the model of the underlying dynamics $g$, as well as a way to bound the next iterate in the underlying natural progression $y^\mathrm{nat}_{t+1}$. In practice, we would like to avoid estimating these two quantities and instead derive an update rule requiring minimal knowledge.

Since we know that optimal tapering is achieved by constraint matching (up to dose positivity) and that integral control can match setpoints asymptotically, in this section we propose and analyze a simple integral controller that takes takes the current well-being $y_t$ as input and uses its deviation from the constraint $y_\mathrm{min}$ to iteratively compute dose corrections.

We demonstrate that this simple approach can reduce a dose while rarely violating the constraints when the gains are chosen appropriately. In fact, one needs to only know a rough range of the immediate effect of a single dose, which is equal to the value $g(0)$. Moreover, when the dynamics are discretized coarsely enough, we show that the resulting doses monotonically decrease to zero in finite time while ensuring \textit{perfect} constraint satisfaction (i.e. $y_t \geq y_\mathrm{min}$ for all $t$).

To proceed, let $(x)_+:=\max(x,0)$ and $(x)_-:=\min(x,0)$
Consider the integral control law
\begin{equation}\label{eq:simple_update_true}
u_t =  \max \biggl\{0,u_{t-1} - K_+ (y_t-y_\mathrm{min})_+ - K_-(y_t-y_\mathrm{min})_- \biggl\}\,.
\end{equation}
There are two differences between the above protocol and a canonical integral controller. First, the control action is clipped at $0$ as it is impossible to take a negative dose of a medication. Second, it allows for two gains: $K_+$ for when $y_t$ is above the desired ``setpoint'' $y_\mathrm{min}$, and $K_-$ for when it falls below. Using two gains allows for more conservative dose corrections when $K_+ < K_-$, leading to smaller decrements when $y_t > y_\mathrm{min}$ and bigger increments when $y_t < y_\mathrm{min}$. 

The main theorem of this section bounds the \emph{long-term constraint violation} \cite{long_term_constraints, mannor2006online} of the proposed integral controller. 
The bound we derive ensures that the running average of the well-being observations $y_t$ up until a time $T$ is greater than $y_\mathrm{min}$ minus a penalty decaying at rate $\frac{1}{T}$. This means that the overall well-being satisfies the constraint more strictly as time increases. 

\begin{theorem}\label{thm:general_cumulative_viol} For any underlying $y^\mathrm{nat}$, doses prescribed by (\ref{eq:simple_update_true}) ensure that for any $T$
$$
\frac{\sum_{t=1}^T y_t}{T} \geq y_\mathrm{min} -  \frac{y_0 - y_{\mathrm{min}}}{T}\,,
$$
provided $K_+\leq g(0)^{-1} \leq K_-$.
\end{theorem}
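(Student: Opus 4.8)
The plan is to recast the claim as the equivalent signed long-term violation bound $\sum_{t=1}^T (y_t - y_{\mathrm{min}}) \ge -(y_0 - y_{\mathrm{min}})$, obtained by multiplying through by $T$ and rearranging, and then to establish it by telescoping the scaled dose $g(0)u_t$ as a potential. The whole argument will rest on one per-step inequality relating the observed slack $y_t - y_{\mathrm{min}}$ to the one-step dose change. A feature worth exploiting is that $y^{\mathrm{nat}}$ never enters this inequality: it involves only the control recursion \eqref{eq:simple_update_true} and the realized observations $y_t$, which is precisely why the hypothesis constrains only the gains and the conclusion holds for every natural progression.

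Concretely, I would abbreviate the decrement as $D_t := K_+ (y_t - y_{\mathrm{min}})_+ + K_- (y_t - y_{\mathrm{min}})_-$, so that $u_t = \max\{0, u_{t-1} - D_t\}$. The key lemma is that the gain sandwich $K_+ \le g(0)^{-1} \le K_-$ forces $g(0) D_t \le y_t - y_{\mathrm{min}}$ for every $t$. I would prove this by cases on the sign of $y_t - y_{\mathrm{min}}$: if $y_t > y_{\mathrm{min}}$ only the $K_+$ term is nonzero and $K_+ \le g(0)^{-1}$ gives the bound directly; if $y_t < y_{\mathrm{min}}$ only the $K_-$ term survives and multiplying $g(0)^{-1} \le K_-$ by the negative quantity $y_t - y_{\mathrm{min}}$ reverses the inequality into the desired direction; the case $y_t = y_{\mathrm{min}}$ is trivial. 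Next I would record the elementary consequence of the clipping, namely $u_{t-1} - u_t \le D_t$ (because $\max\{0, u_{t-1}-D_t\} \ge u_{t-1}-D_t$), which holds with equality whenever the dose does not hit the floor. Chaining the two gives the clean per-step estimate $g(0)(u_{t-1} - u_t) \le y_t - y_{\mathrm{min}}$.

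Summing over $t = 1, \dots, T$ telescopes the left side to $g(0)(u_0 - u_T)$, so that $\sum_{t=1}^T (y_t - y_{\mathrm{min}}) \ge g(0)(u_0 - u_T)$. It then remains to discharge the two boundary terms. The initial term is benign: tracing the first update shows $g(0)u_0 \ge -(y_0 - y_{\mathrm{min}})$ (when $y_0 < y_{\mathrm{min}}$ the $K_- \ge g(0)^{-1}$ branch makes $g(0)u_0 \ge y_{\mathrm{min}} - y_0$, and otherwise the right side is nonpositive while $g(0)u_0 \ge 0$). The terminal term $g(0)u_T$ is where I expect the real difficulty to lie, and it is the step I would scrutinize most carefully: the raw telescoping only controls $u_0 - u_T$, so closing the gap to the stated bound requires arguing that the final dose cannot inflate the deficit --- either through a monotonicity/non-increase property of the tapering doses or a sharper treatment of the clipped steps, where $u_t = 0$ forces $y_t > y_{\mathrm{min}}$ and contributes positive slack. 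Once the terminal term is handled, dividing by $T$ delivers the claimed average bound, with the transient $-(y_0-y_{\mathrm{min}})/T$ emerging directly from the initial boundary term.
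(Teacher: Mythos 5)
Your per-step machinery is correct and constitutes a genuinely different, more elementary route than the paper's. The paper proves Lemma~\ref{lem:inst_viol} by comparing $u_t$ to the exact constraint-matching dose, which drags the full convolution dynamics, $y^{\mathrm{nat}}$, and the tail $\sum_{k\ge 1}g(k)(\cdot)$ through the argument; your two ingredients --- $g(0)D_t \le y_t - y_{\mathrm{min}}$ from the gain sandwich (using $g(0)>0$) and $u_{t-1}-u_t\le D_t$ from the clipping --- use only the control recursion \eqref{eq:simple_update_true} and the realized observations, and your treatment of the initial term $g(0)u_0 \ge -(y_0-y_{\mathrm{min}})$ is also sound. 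Carried out carefully, both routes land on the same inequality,
$$\sum_{t=1}^{T}(y_t - y_{\mathrm{min}}) \;\ge\; -(y_0 - y_{\mathrm{min}}) - g(0)\,u_T\,.$$

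However, the terminal term you flag is a genuine gap, and it cannot be closed as the theorem is stated. The doses produced by \eqref{eq:simple_update_true} are not monotone in general (they increase whenever $y_t<y_{\mathrm{min}}$), and the clipped steps give slack in the wrong direction, so there is no way to argue that $g(0)u_T$ can be absorbed. Indeed the stated bound fails on a simple instance: take $g(0)=1$ and $g(k)=0$ for $k\ge 1$ (an LPOP with $\tau_0=1$), $K_+=K_-=1$, $y_{\mathrm{min}}=0$, $y_0=y_0^{\mathrm{nat}}=0$, and $y_1^{\mathrm{nat}}=-5$; then $u_0=0$ and $y_1=-5$, so at $T=1$ the claim reads $-5\ge 0$. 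The paper's own proof hits exactly the same obstruction and disposes of it only through an indexing slip: telescoping $-\sum_{t=0}^{T}\sum_{k=1}^{t}g(k)(u_{t-k-1}-u_{t-k})$ yields $\sum_{t=0}^{T-1}u_t\,g(T-t)$, not $\sum_{t=0}^{T}u_t\,g(T-t)$, so the nonnegative quantity $g(0)u_T$ is silently added to the lower bound before being cancelled against $y_{T+1}$. The honest conclusion of your argument (and of the paper's, once corrected) is the displayed inequality above, i.e.\ the stated bound with an extra $-g(0)u_T/T$ on the right-hand side; recovering the theorem verbatim requires an additional hypothesis (for instance a restriction on one-step decreases of $y^{\mathrm{nat}}$ in the spirit of Proposition~\ref{prop:tau_0_eq_1}), not a cleverer handling of the telescoping.
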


The proof of this theorem is broadly inspired from analyses of the finite time behavior of average constraint violation in regret minimization by
Mannor and Tsitsiklis~\cite{mannor2006online} and
Mahdavi et al.~\cite{long_term_constraints}. To prove the theorem, we need the following technical lemma that lower bounds the instance-wise deviation of the $y_t$ obtained from the dosing scheme (\ref{eq:simple_update_true}) from $y_\mathrm{min}$ if the gains $K_+$/$K_-$ upper/lower bound $g(0)^{-1}$ respectively. The proof of the lemma is given in Appendix~\ref{app:lem_proof}.

\begin{lemma}\label{lem:inst_viol} For any underlying $y^\mathrm{nat}$, and any gains satisfying $K_+\leq g(0)^{-1} \leq K_-$, setting $u_t$ according to (\ref{eq:simple_update_true}) for $t\geq 0$ yields $y_{t+1}$ satisfying
$$y_{t+1} \geq y_\mathrm{min} + (y^\mathrm{nat}_{t+1} - y^\mathrm{nat}_t) - \sum_{k=1}^{t} g(k) (u_{t-k-1} - u_{t-k})\,.$$
where we adopt the convention that $u_{-1}=0$.
\end{lemma}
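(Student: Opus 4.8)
The plan is to show that the claimed telescoped inequality is, after rearrangement, \emph{equivalent} to the single one-step bound
$$y_t - y_\mathrm{min} \geq g(0)\,(u_{t-1} - u_t),$$
and then to prove this bound directly from the clipped update and the gain conditions. The value of this reformulation is that it isolates precisely where $K_+\leq g(0)^{-1}$ and $g(0)^{-1}\leq K_-$ enter, collapsing the entire statement to an elementary sign analysis of the error $e_t := y_t - y_\mathrm{min}$.

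First I would carry out the algebraic reduction. Moving the natural-progression and convolution terms to one side and substituting the dynamics \eqref{eq:dynamics} for $y_{t+1}$, the quantity
$$y_{t+1} - (y^\mathrm{nat}_{t+1} - y^\mathrm{nat}_t) + \sum_{k=1}^{t} g(k)(u_{t-k-1} - u_{t-k})$$
collapses: the $-\sum_{k=1}^{t} g(k) u_{t-k}$ piece cancels all but the $g(0)u_t$ term of the convolution, leaving $g(0)u_t + \sum_{k=1}^{t} g(k)\,u_{t-k-1} + y^\mathrm{nat}_t$. The $k=t$ summand carries $u_{-1}=0$ by convention, so the remaining sum is exactly $\sum_{k=1}^{t-1} g(k)\,u_{t-1-k}$, which by \eqref{eq:dynamics} equals $y_t - g(0)u_{t-1} - y^\mathrm{nat}_t$. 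Hence the full expression equals $y_t - g(0)(u_{t-1}-u_t)$, and the lemma is equivalent to $y_t - y_\mathrm{min}\geq g(0)(u_{t-1}-u_t)$ as claimed. The $t=0$ case is covered by the same computation with an empty sum and $y_0 = y^\mathrm{nat}_0$.

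Next I would extract the control-law inequality $u_{t-1} - u_t \leq K_+(e_t)_+ + K_-(e_t)_-$. This holds with equality in the unclipped case, and in the clipped case ($u_t = 0$) because the unclipped expression being nonpositive forces $u_{t-1}\leq K_+(e_t)_+ + K_-(e_t)_-$; either way $u_{t-1}-u_t$ is dominated by that right-hand side. I would then split on the sign of $e_t$, noting $g(0)>0$ (since $g$ is an opponent process, so $\tau_0\geq 1$ and $g(0)>0$, which is also what makes the gain conditions meaningful). When $e_t\geq 0$ the bound reads $u_{t-1}-u_t\leq K_+ e_t$, so multiplying by $g(0)>0$ and using $g(0)K_+\leq 1$ gives $g(0)(u_{t-1}-u_t)\leq g(0)K_+ e_t\leq e_t$. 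When $e_t<0$ the bound reads $u_{t-1}-u_t\leq K_- e_t$, so multiplying by $g(0)$ and using $g(0)K_-\geq 1$ together with $e_t<0$ gives $g(0)(u_{t-1}-u_t)\leq g(0)K_- e_t\leq e_t$. In both cases the required one-step bound holds, which by the reduction completes the proof.

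I expect the main obstacle to be purely bookkeeping: getting the reindexing of the two convolution sums right and correctly invoking the $u_{-1}=0$ convention to discard the boundary $k=t$ term, since a single off-by-one error there breaks the clean collapse to $y_t - g(0)(u_{t-1}-u_t)$. The conceptual content, namely the sign-based case analysis, is short once the reduction is in place, and the only nonalgebraic input is the positivity of $g(0)$, which is needed both for the direction of the inequalities and for reading off each gain condition in the appropriate regime of $e_t$.
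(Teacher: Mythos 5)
Your proof is correct and rests on the same two ingredients as the paper's own argument: the reindexing identity (using $u_{-1}=0$) that collapses the convolution sums to $y_t - g(0)(u_{t-1}-u_t)$, and the sign-split inequality $g(0)(u_{t-1}-u_t) \leq y_t - y_\mathrm{min}$ obtained from the clipped two-gain update together with $K_+ \leq g(0)^{-1} \leq K_-$ and $g(0)>0$. The paper packages the identical computation in the forward direction, routing through the exact constraint-matching dose $u_t^\star$, whereas you work backward from the target inequality; the mathematical content is the same.
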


With this lemma, we can now prove Theorem~\ref{thm:general_cumulative_viol}.

\begin{proof}[Proof of Theorem~\ref{thm:general_cumulative_viol}]  Using Lemma~\ref{lem:inst_viol}, we have that
\begin{align*}
\sum_{t=0}^{T} y_{t+1} &\geq (T+1) y_\mathrm{min} + \sum_{t=0}^{T} \big((y^\mathrm{nat}_{t+1} - y^\mathrm{nat}_t) - \sum_{k=1}^{t} g(k) (u_{t-k-1} - u_{t-k})\big) \\
&= (T+1) y_\mathrm{min} - y^\mathrm{nat}_0 + y^\mathrm{nat}_{T+1} + \sum_{t=0}^{T} u_t g(T-t) \\
&= (T+1) y_\mathrm{min} + y_{T+1} - y_0 
\end{align*}
where the first equality follows by telescoping, and the second from the implicit convention in \eqref{eq:dynamics} that $y_0^\mathrm{nat} = y_0$. By cancelling $y_{T+1}$ on both sides, we get $$\sum_{t=0}^{T-1} y_{t+1} \geq T y_\mathrm{min} - (y_0 - y_\mathrm{min})$$
which can be immediately rearranged into the stated bound:
\begin{align*}
\frac{\sum_{t=1}^{T} y_{t}}{T} = \frac{\sum_{t=0}^{T-1} y_{t+1}}{T} \geq y_\mathrm{min} - \frac{y_0 - y_\mathrm{min}}{T} 
\end{align*}
\end{proof}

\begin{remark}\label{rmk:padding}{\normalfont If it is undesirable to fluctuate too much below $y_\mathrm{min}$, we can run a padded integral controller by adding $\delta$ to $y_\mathrm{min}$. Theorem~\ref{thm:general_cumulative_viol} then implies 
$$\frac{\sum_{t=1}^T y_t}{T} \geq y_\mathrm{min} + \delta - \frac{y_0 - y_{\mathrm{min}} -\delta}{T}\,.$$}
\end{remark}
\begin{remark}\label{rmk:dev_bd_tv} {\normalfont
As in the previous section, we can extend the integral controller (\ref{eq:simple_update_true}) to time-varying constraints by plugging in $y_\mathrm{min}^{(t)}$. In this case, the guarantee from Theorem~\ref{thm:general_cumulative_viol} would instead bound deviation from the average $y_\mathrm{min}$, i.e. from $T^{-1} \sum_{t=1}^T y_\mathrm{min}^{(t)}$.}    
\end{remark}

We end the technical component of this section by deriving stronger guarantees for opponent processes with $\tau_0=1$, a subset of the LPOP class considered thus far. Requiring $\tau_0 = 1$ can be interpreted as an assumption on the discretization of the dose updating scheme (e.g., every day vs. every week). For example, we can turn a system $g$ with $\tau_0 > 1$ into a system $g'$ with $\tau_0' = 1$ by taking $g'(t) = \tau_0^{-1} \cdot \sum_{t'=t\cdot \tau_0}^{(t+1) \cdot \tau_0 - 1}g(t')$.

Our final result shows that for such suitably discretized opponent processes, when the underlying natural progression does not trend down too fast, we can guarantee that the proposed integral controller ensures \textit{perfect} constraint satisfaction, as well as monotonically non-increasing doses that reach $0$ in finite time. 

\begin{proposition}\label{prop:tau_0_eq_1}
For any $g$ with $\tau_0= 1$, any initial $u_0$ which ensures $y_1\geq y_\mathrm{min}$, any $\delta>0$, and any natural progression sequence satisfying $y^\mathrm{nat}_{t+1} \geq y^\mathrm{nat}_t - g(t)u_0 + \delta/t$, doses $u_t$ prescribed by (\ref{eq:simple_update_true}): (i) are non-increasing, (ii) maintain $y_t\geq y_\mathrm{min}$ for all $t$, and (iii) are equal to zero after some finite time $T_0$.
\end{proposition}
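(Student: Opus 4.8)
The plan is to establish (i) and (ii) \emph{simultaneously} by induction on $t$, since the two claims are coupled: the controller \eqref{eq:simple_update_true} only decrements the dose when $y_t\geq y_\mathrm{min}$ (in that case $(y_t-y_\mathrm{min})_-=0$ and the increment term vanishes), so monotonicity of the doses depends on feasibility, while feasibility in turn relies on the doses being monotone. The starting observation is that $\tau_0=1$ forces $g(0)>0$ and $g(k)\leq 0$ for every $k\geq 1$ (Definition~\ref{def:opp_process}), and that such a $g$ is an LPOP by Remark~\ref{rmk:inst_opp}, so Lemma~\ref{lem:inst_viol} is available under its standing gain condition $K_+\leq g(0)^{-1}\leq K_-$ (which is meaningful precisely because $g(0)>0$, and which I will also use in the form $K_+>0$).

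For the inductive step I would assume $y_s\geq y_\mathrm{min}$ for $s\leq t$ together with $u_0\geq u_1\geq\cdots\geq u_{t-1}\geq 0$, and apply Lemma~\ref{lem:inst_viol} to lower bound $y_{t+1}$. The crucial manipulation is to split the telescoping sum $\sum_{k=1}^t g(k)(u_{t-k-1}-u_{t-k})$ into its $k=t$ boundary term and the rest. Using the convention $u_{-1}=0$, the boundary term equals $-g(t)u_0$, while each remaining term ($1\leq k\leq t-1$) is a product of $g(k)\leq 0$ with a nonnegative decrement $u_{t-k-1}-u_{t-k}\geq 0$, hence is $\leq 0$. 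This yields
$$
y_{t+1}\geq y_\mathrm{min}+(y^\mathrm{nat}_{t+1}-y^\mathrm{nat}_t)+g(t)u_0 ,
$$
and substituting the hypothesis $y^\mathrm{nat}_{t+1}\geq y^\mathrm{nat}_t-g(t)u_0+\delta/t$ cancels the $g(t)u_0$ terms, leaving $y_{t+1}\geq y_\mathrm{min}+\delta/t>y_\mathrm{min}$. This closes (ii) with strict slack, and then $y_{t+1}\geq y_\mathrm{min}$ immediately gives $u_{t+1}=\max\{0,u_t-K_+(y_{t+1}-y_\mathrm{min})\}\leq u_t$, extending the monotonicity and completing the induction for (i). The base case $y_1\geq y_\mathrm{min}$ is exactly the hypothesis on $u_0$.

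For (iii) I would exploit the \emph{strict} slack just obtained, $y_s-y_\mathrm{min}\geq\delta/(s-1)$ for $s\geq 2$. As long as the dose has not yet been clipped, feasibility from (ii) turns the update into the exact recursion $u_t=u_{t-1}-K_+(y_t-y_\mathrm{min})$, so telescoping gives $u_T\leq u_0-K_+\delta\sum_{j=1}^{T-1}1/j$. Because the harmonic series diverges, the right-hand side eventually drops below zero, contradicting $u_T\geq 0$; hence the clip $\max\{0,\cdot\}$ must activate at some finite $T_0$. Finally, once $u_{T_0}=0$, feasibility (ii) keeps $y_t\geq y_\mathrm{min}$, so every subsequent update subtracts a nonnegative quantity from $0$, and the dose remains $0$ thereafter.

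The main obstacle — and the part that makes the peculiar form of the natural-progression hypothesis intelligible — is engineering the cancellation in the inductive step. The surviving decrement $\delta/t$ is not slack one can simply assume: it must outlast the boundary term $g(t)u_0$ (which is exactly why $\tau_0=1$ and the convention $u_{-1}=0$ are needed), cancelling it against the $-g(t)u_0$ built into the hypothesis. Getting the index bookkeeping in the split sum correct, and verifying that every non-boundary term carries the favorable sign under the inductive monotonicity, is where the care lies; the harmonic-series argument for (iii) is then routine.
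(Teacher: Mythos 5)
Your proof is correct and follows essentially the same route as the paper's: a joint induction on (i) and (ii) using Lemma~\ref{lem:inst_viol}, isolating the $k=t$ boundary term $-g(t)u_0$ of the telescoping sum (with $u_{-1}=0$), discarding the remaining terms by the sign of $g(k)$ and the inductive monotonicity, cancelling against the natural-progression hypothesis to get the slack $\delta/t$, and then invoking divergence of the harmonic series for (iii). Your phrasing of (iii) as a telescoped inequality plus contradiction is marginally more careful than the paper's, which writes the dose as an exact maximum, but the argument is the same.
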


\begin{proof} To prove (i) and (ii) jointly, we will proceed by induction on $t$. The base case holds by the assumption that we have access to an initial $u_0$ for which $y_1\geq y_\mathrm{min}$. Assume for the inductive hypothesis that $0 \leq u_{t-1} \leq \ldots \leq u_0$ and $y_t\geq y_\mathrm{min}$. First, observe that since $y_t - y_\mathrm{min} \geq 0$, we have
$$u_t \leq \max\{0, u_{t-1}\} = u_{t-1}$$
showing the inductive step for (i). To show the inductive step for (ii), note that by Lemma~\ref{lem:inst_viol},
\begin{align*}
y_{t+1} &\geq y_\mathrm{min} + (y^\mathrm{nat}_{t+1} - y^\mathrm{nat}_t) - \sum_{k=1}^t g(k) (u_{t-k-1} - u_{t-k})  \\
&\geq y_\mathrm{min} + (y^\mathrm{nat}_{t+1} - y^\mathrm{nat}_t) + g(t)u_0 \\
&\geq y_\mathrm{min} + \delta/t 
\end{align*}
where the second inequality follows since $g(k) < 0$ and $u_{t-k-1} \geq u_{t-k}$ for $k=\overline{1,t-1}$, while the last inequality follows from our assumption that $y^\mathrm{nat}_{t+1} \geq y^\mathrm{nat}_t - g(t) u_0 + \delta/t$.

Therefore, by induction, (i) and (ii) are true. Finally, by the calculation above, we know that $u_t$ is decreasing and, further, that it can be expressed as $$u_t = \max\biggl\{0, u_0 - K_+ \delta\cdot \sum_{k=1}^t \frac{1}{t}\biggl\}$$
which, since $\sum_{k=1}^t 1/t \geq \ln{t}$, implies $u_t = 0$ for all $t\geq \exp\biggl\{\dfrac{u_0}{K_+ \delta}\biggl\}$, proving (iii).
\end{proof}

Finally, What would actually be needed to implement this protocol? A patient well versed in subjective symptom scoring would give a minimally tolerable value for the day. The would compare this value to how well they felt yesterday. Based on this difference, their new dose would simply be given by Equation~\eqref{eq:simple_update_true}. Such a scheme could be easily implemented in a spreadsheet.

As we will show in the experiments below, the protocol is not sensitive to the choice of $K_\pm$. A rule of thumb value for setting $K_\pm$ could be devised based on the smallest dose change that yields a reasonably perceptible change in $y$. So, for instance, if the smallest noticeable dosage change would be $5\text{mg}$ and it would lead to a change in $y$ by somewhere between $1$ and $2$ `units', then $K_+$ should be roughly $2.5$ and $K_-$ should be $5$. 

\section{Numerical Simulations}\label{sec:experiments}

In this section, we conduct numerical simulations to gain insights into the performance of the optimal and the integral-based protocols compared to that of standard (non-adaptive) tapering approaches when deployed on a population with different well-being constraints. Due to a lack of available simulators and open source data for our problem, we rely on synthetic impulse response functions, modelled to induce realistic and varied impulse responses and allostatic adaptations. We describe the experiment setup in detail below, and plot the results in Figure~\ref{fig:tapering_results}.

\paragraph{Task specification.}We study the problem of tapering in the four opponent processes from Figure~\ref{fig:drug_behavior}, starting from the allostatic state reached after taking dose $u=1$ for $T_\mathrm{init}=60$ timesteps. For each setting, we consider tapering a population of $N$ units with uniformly distributed constraints $y_\mathrm{min}$, over some time frame $T_\mathrm{taper}$. 
Concretely, we take $$y_\mathrm{min}^A \sim \text{Unif}(-1.5, 0.5), \; y_\mathrm{min}^B \sim \text{Unif}(-2, 0), \; y_\mathrm{min}^C \sim \text{Unif}(-1, 1), \; y_\mathrm{min}^D \sim \text{Unif}(-4.25, -2.25)$$ $$\text{and }\; T_\mathrm{taper}^A = 180, \; T_\mathrm{taper}^B = 120, \; T_\mathrm{taper}^C = 90, \; T_\mathrm{taper}^D = 15.$$ For a given individual and a given tapering protocol, we measure the average cumulative dose taken and the average cumulative constraint violation\footnote{Note that this differs from the long-term constraint violation analyzed in Theorem~\ref{thm:general_cumulative_viol}. Here we measure the average over constraint violations $(y_\mathrm{min} - y)_+$, rather than the difference between the average well-being and $y_\mathrm{min}$.} over the tapering horizon $T_\mathrm{taper}$. We then average these two metrics over the $N$ units to obtain the average performance of a protocol over a population. Finally, for all setups, we also perturb $y$ at every time with $\text{Unif}(-0.25, 0.25)$ noise in order to capture random deviations in an individual's day-to-day well-being. 

\paragraph{Specification of tapering protocols.}As baselines, we consider the linear ($u_t = u_0 -\alpha t$) and the exponential ($u_t= \alpha^t u_0$) dose decay protocols. To evaluate the protocol designs rather than a particular substance-dependent instantiation, we compute the population performance in terms of the above two metrics over a range of decay rates $\alpha$. Doing so also gives us a comprehensive trade-off curve between constraint violation and dosage reduction for different rates.

To evaluate the integral controller~\eqref{eq:simple_update_true}, we need to specify the patient constraint $y_\mathrm{min}$ and the gains $K_{-/+}$. The constraint can directly be provided by the patient so we feed it directly to the protocol. Since our method can be instantiated with a padded $y_\mathrm{min}^\delta = y_\mathrm{min} + \delta$ to provide stronger constraint violation guarantees (Remark~\ref{rmk:padding}), we create a trade-off curve by sweeping over different settings of $\delta$. To select the gains, we assume access to a course range of the instantaneous response $g(0)$. In the experiment below we specify particular settings for the sake of clarity, and provide an ablation over a multitude gain specifications in Appendix~\ref{app:exps} instead. Concretely, we take the lower and upper bounds to be $50\%$ and $150\%$ of the true response, which corresponds to the following conservative gain settings: 
$$K_+ = (2/3) \cdot g(0)^{-1} \text{ and } K_-=2 \cdot g(0)^{-1}.$$
Finally, we compare to the optimal tapering protocol, for which we use the full model $g$ and the true underlying natural progression to compute the MED via \eqref{eq:opt}.

\begin{figure}
\centering
\includegraphics[width=\linewidth]{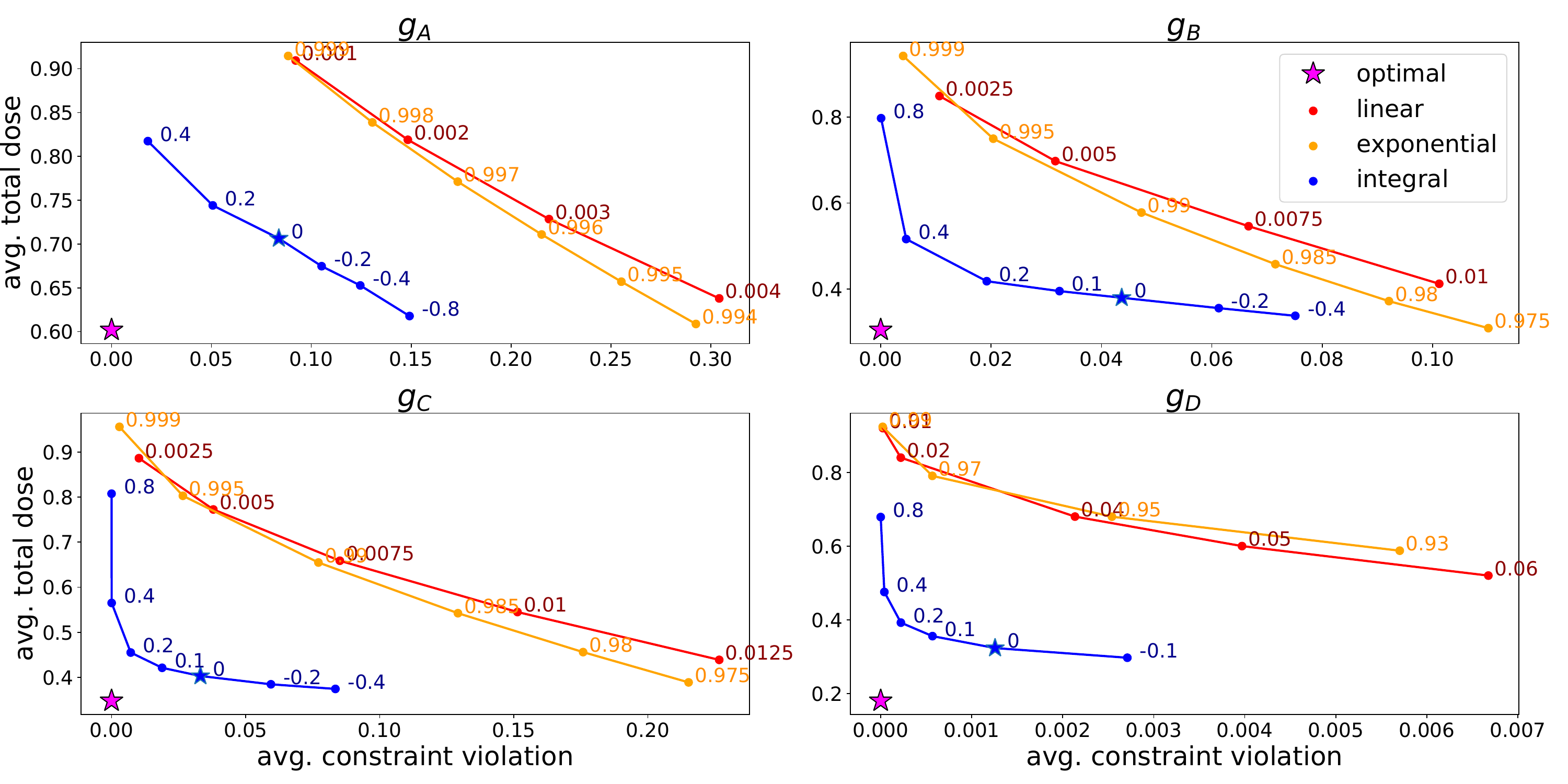}
\caption{Average cumulative constraint violation (x-axis) against average cumulative dose (y-axis). Each point is averaged over $N=100$ units.}
\label{fig:tapering_results}
\end{figure}

\paragraph{Results.}In Figure~\ref{fig:tapering_results}, we see that the integral control protocol dominates the baselines considered, as the curves obtained by sweeping over the various parameters do not intersect. Further, the above results corroborate the prior findings discussed in Section~\ref{sec:related} that very slow rates are necessary to maintain acceptable levels of withdrawal management over a population when restricted to the existing non-adaptive approaches. Finally, there is a non-negligible performance gap between the proposed integral tapering protocol and the optimal protocol (achieved by MED dosing) which opens an avenue for future tapering research.

In the Appendix, we provide further experiments covering the robustness of the integral controller to even more extreme misspecification of the immediate response $g(0)$, as well as performance in terms of an additional metric of potential interest: the percentage of units that successfully taper within the specified time frame. We also invite the reader to explore different settings for the systems and/or protocols by accessing our code \href{https://github.com/paula-gradu/tapering}{here}.

\section{Conclusion \& Discussion}\label{sec:discussion}

There remain several control-theoretic considerations in tapering protocol design. The desirable guarantee of monotonic dose decrease was induced by a discretization such that the full positive effect occurs within one timestep. How can we find the appropriate time window without excessive exploration? Can we develop methods for estimating the switching point of the process and discretizing such that the induced discrete system has $\tau_0 = 1$? 

Additionally, numerous practical, clinical, and ethical considerations must be considered for the downstream deployment of the proposed methods. For example, neither the optimal policy nor our default integral control policy decrease the dose monotonically. For some individuals, this could result in recommending increasing dosages, which may be undesirable in clinical settings. A translation to practice may necessarily add the constraint that a dose never exceeds some level. In turn, this may mean that desired levels of well-being are not achievable for some patients. 

To our knowledge, this work is the first formalization of substance tapering as an optimal control problem. This formalism leads to a concrete approach that departs from existing literature on tapering and prescribes adaptivity. Excitingly, the proposed protocols may be able to address some of the limitations of current approaches while still providing simple, explainable rules. We hope that the straightforward protocols are amenable to undergoing human subject validation in future studies.

\bibliographystyle{plain}
\bibliography{ref}

\newpage
\appendix

\section{Proof of Lemma~\ref{lem:inst_viol}}\label{app:lem_proof}

We first derive an alternate expression of the (possibly negative) dose $u_t^\star$ that leads to $y_\mathrm{min}$. Concretely, using the same derivation in Propositon~\ref{thm:opt_ynat} of the maximally greedy protocol (\ref{eq:opt}), we have that
\begin{equation*}
u_t^\star =\frac{y_\mathrm{min} - y_{t+1}^\mathrm{nat} - \sum_{k=1}^t g(k) u_{t-k}}{g(0)}   
\end{equation*}
We aim to express this in terms of the current set-point error $y_t - y_\mathrm{min}$ and $u_{t-1}$, so we add and subtract $y^\mathrm{nat}_t$ and $\sum_{k=1}^{t-1} u_{t-k-1}$ (which generate $y_t - g(0) u_{t-1}$) to obtain:
\begin{align*}
u_t^\star &= g(0)^{-1}\big(y_\mathrm{min} - y^\mathrm{nat}_t + (y^\mathrm{nat}_t - y^\mathrm{nat}_{t+1}) \\
&\quad - \sum_{k=1}^{t-1} g(k) u_{t-k-1} - \sum_{k=1}^{t} g(k) (u_{t-k} - u_{t-k-1})\big)
\end{align*}
\noindent where we take $u_{-1} =0$. Since $y^\mathrm{nat}_t + \sum_{k=1}^{t-1} g(k) u_{t-k-1} = y_t - g(0) u_{t-1}$, we can rewrite the above as
\begin{align*}
u_t^\star &= u_{t-1} - g(0)^{-1}(y_t - y_\mathrm{min}) - g(0)^{-1}(y^\mathrm{nat}_{t+1} - y^\mathrm{nat}_t) \\
&\quad + g(0)^{-1}\left(\sum_{k=1}^{t} g(k) (u_{t-k-1} - u_{t-k})\right) 
\end{align*}
Since $K_+\leq g(0)^{-1}$ and $K_-\geq  g(0)^{-1}$, we have that $$- g(0)^{-1}(y_t-y_\mathrm{min}) \leq - K_+ (y_t-y_\mathrm{min})_+ - K_-(y_t-y_\mathrm{min})_- $$
And therefore, if $u_t$ is given by (\ref{eq:simple_update_true}), then
\begin{align*}
u_t^\star &\leq u_t - g(0)^{-1}(y^\mathrm{nat}_{t+1} - y^\mathrm{nat}_t) \\
&\qquad+ g(0)^{-1}\left(\sum_{k=1}^{t} g(k) (u_{t-k-1} - u_{t-k})\right) 
\end{align*}
Multiplying both sides by $g(0)$ and then adding $\sum_{k=1}^{t} g(k) u_{t-k} + y^\mathrm{nat}_{t+1}$, we have
\begin{align*}
&y^\mathrm{nat}_{t+1} + g(0) u_t^\star + \sum_{k=1}^{t} g(k) u_{t-k}   \\
\leq& y^\mathrm{nat}_{t+1} + \sum_{k=0}^{t} g(k) u_{t-k} 
  - (y^\mathrm{nat}_{t+1} - y^\mathrm{nat}_t) \\
&\qquad\qquad + \sum_{k=1}^{t} g(k) (u_{t-k-1} - u_{t-k})
\end{align*}
Since by construction the LHS is equal to exactly $y_\mathrm{min}$ and by the dynamics equation $y^\mathrm{nat}_{t+1} + \sum_{k=0}^{t} g(k) u_{t-k} = y_t$, rearranging the above yields:
$$y_\mathrm{min} + (y^\mathrm{nat}_{t+1} - y^\mathrm{nat}_t) - \sum_{k=1}^{t} g(k) (u_{t-k-1} - u_{t-k}) \leq y_t$$
which is precisely the stated inequality.

\section{Generalization to non-linear opponent processes}\label{app:nonlin_gen}

Our definition of \eqref{eq:dynamics} implies a linear contribution of the dose to the observed well-being metric. One may wonder if we can accommodate the case where the opponent process varies with $u$, i.e. $y_t$ evolves according to the following generalized version of (\ref{eq:dynamics}):
\begin{equation}\label{eq:dynamics_generalized}
y_{t+1} = \sum_{k=0}^{t} g(k, u_{t-k}) + y^\mathrm{nat}_{t+1}
\end{equation}
In this subsection, we show we can, under a corresponding generalization of well-behavedness from Definition~\ref{def:well_behaved_opp_process}. The rate of change of each $g(t, \cdot)$ with respect to the dose $u$ will play an important role, so we introduce the following two shorthand notations:
\begin{equation*}\label{eq:rate_of_change}
 \partial_u^{-}(t) \doteq \inf_{u} \partial_u g(t, u) \quad \quad \partial_u^{+}(t) \doteq \sup_{u} \partial_u g(t, u)   
\end{equation*}

\begin{definition}[Generalized Opponent Process]\label{def:opp_process_generalized} We say that $g$ is a generalized opponent process if there exists a time $\tau_0$ such that:
\begin{enumerate}
    \item $\inf_u g(\tau, u) \geq 0$ and $\partial_u^{-}(t) \geq 0$ when $\tau<\tau_0$,
    \item $\sup_u g(\tau, u) \leq 0$ and $\partial_u^{+}(t) \leq 0$ when $\tau\geq \tau_0$.
\end{enumerate} 
\end{definition}

\begin{definition}[Generalized LPOP]\label{def:well_behaved_opp_process_genealized} A generalized opponent process $g$ is a generalized linearly progressing opponent process (G-LPOP) if there exists an $\alpha \in (0,1)$ such that:
\begin{enumerate}
    \item $\partial_u^{+}(t+1) \leq \alpha \cdot \partial_u^{-}(t)$ when $t < \tau_0 - 1$,
    \item $\partial_u^{-}(t+1) \geq \alpha \cdot \partial_u^{+}(t)$ when $t \geq \tau_0$.
\end{enumerate}
\end{definition}

In this case the proof of Theorem~\ref{thm:max_greedy_opt} changes a bit, relying on Mean Value Theorem. We give the statement and proof in Theorem~\ref{thm:max_greedy_opt_generalized} below.

\begin{theorem}\label{thm:max_greedy_opt_generalized}For any G-LPOP $g$, taking the maximally greedy dose at every time maintains $y_t \geq y_\mathrm{min}$ for all $t$ with minimal cumulative dose. \end{theorem}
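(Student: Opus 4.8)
The plan is to mirror the contradiction argument used for the linear case (Theorem~\ref{thm:max_greedy_opt}) almost verbatim, replacing the single place where linearity entered with a first-order mean value expansion in the dose. Suppose $\{u_t^\star\}$ is a feasible schedule of minimal cumulative dose and that at some time $t_0$ the action $u_{t_0}^\star$ is not the maximally greedy dose of Definition~\ref{def:greedy_dose}. Then $y_{t_0+1}^\star > y_\mathrm{min}$, so there is an $\epsilon>0$ for which lowering $u_{t_0}^\star$ to $u_{t_0}^\star - \epsilon$ still leaves $y_{t_0+1}' \geq y_\mathrm{min}$. I would again compensate one step later, raising $u_{t_0+1}^\star$ to $u_{t_0+1}^\star + \alpha\epsilon$ with $\alpha$ the separating constant of Definition~\ref{def:well_behaved_opp_process_genealized}, so that the cumulative dose strictly decreases by $(1-\alpha)\epsilon>0$. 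Deriving the contradiction then reduces entirely to checking that every downstream well-being $y_{t_0+1+s}'$ with $s\geq 1$ remains at least $y_{t_0+1+s}^\star$.

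Here is where \eqref{eq:dynamics_generalized} forces the change of argument. In the linear model the perturbation shifted $y_{t_0+1+s}$ by $-\epsilon g(s)+\alpha\epsilon g(s-1)$; now the two perturbed summands are $g(s,u_{t_0}^\star-\epsilon)-g(s,u_{t_0}^\star)$ and $g(s-1,u_{t_0+1}^\star+\alpha\epsilon)-g(s-1,u_{t_0+1}^\star)$. Applying the mean value theorem to $g(s,\cdot)$ and $g(s-1,\cdot)$ in their second argument, I would write these as $-\epsilon\,\partial_u g(s,\xi_s)$ and $+\alpha\epsilon\,\partial_u g(s-1,\zeta_s)$ for intermediate doses $\xi_s,\zeta_s$, so that the net displacement of $y_{t_0+1+s}$ is $\epsilon\big(\alpha\,\partial_u g(s-1,\zeta_s)-\partial_u g(s,\xi_s)\big)$. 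The goal becomes showing this bracket is nonnegative for every $s\geq 1$, uniformly over the (unknown) evaluation points $\xi_s,\zeta_s$.

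I would close the bracket by a three-way case split driven by $\tau_0$ and bounded using the envelope quantities $\partial_u^\pm$. For lags strictly before the switch ($s<\tau_0$) both derivatives are nonnegative by Definition~\ref{def:opp_process_generalized}, and the first condition of Definition~\ref{def:well_behaved_opp_process_genealized} lets me dominate $\partial_u g(s,\xi_s)\leq\partial_u^+(s)\leq\alpha\,\partial_u^-(s-1)\leq\alpha\,\partial_u g(s-1,\zeta_s)$. At the switch ($s=\tau_0$) the lag-$(s-1)$ term is still in the A-phase (nonnegative) while the lag-$s$ term is in the B-phase (nonpositive), so both summands of the bracket are nonnegative and no LPOP hypothesis is needed. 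For lags past the switch ($s>\tau_0$) both derivatives are nonpositive, and I would invoke the second condition of Definition~\ref{def:well_behaved_opp_process_genealized} to guarantee that the magnitude of the helpful lag-$s$ term dominates $\alpha$ times the magnitude of the harmful lag-$(s-1)$ term, again after replacing $\xi_s,\zeta_s$ by the appropriate $\partial_u^\pm$ envelopes.

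The main obstacle I anticipate is precisely this post-switch case: because the mean value theorem leaves $\xi_s$ and $\zeta_s$ uncontrolled, I must pass to the worst case over all admissible doses, and it is exactly the two-sided envelopes $\partial_u^\pm$ that must make the consecutive-lag comparison survive in the nonpositive regime, where signs and magnitudes interact. Verifying that the stated LPOP inequality is oriented so that the least-negative derivative at lag $s$ still dominates $\alpha$ times the most-negative derivative at lag $s-1$, and checking that the boundary lag $s=1$ when $\tau_0=1$ falls harmlessly into the free ``switch'' case, is the delicate bookkeeping that the linear proof never had to confront.
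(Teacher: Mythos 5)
Your proposal follows the paper's own proof of Theorem~\ref{thm:max_greedy_opt_generalized} essentially step for step: the same minimal-cost contradiction setup, the same $(-\epsilon,+\alpha\epsilon)$ two-step perturbation, and the same mean value theorem reduction of the downstream displacement at lag $s$ to the sign of $\alpha\,\partial_u g(s-1,\zeta_s)-\partial_u g(s,\xi_s)$. Where the paper simply asserts ``since $g$ is a G-LPOP, this final expression is greater than or equal to $y_{t+1}^\star$,'' you spell out the three-way case split over $s<\tau_0$, $s=\tau_0$, $s>\tau_0$; your pre-switch and at-switch cases are handled correctly and your write-up is more careful than the paper's one-line justification.

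The post-switch orientation check you flagged as delicate is indeed the one place the argument does not close as stated, so your suspicion is warranted. For $s>\tau_0$ the worst case over the unknown MVT evaluation points requires $\partial_u^{+}(s)\leq\alpha\,\partial_u^{-}(s-1)$ --- the same pairing of envelopes as condition 1 of Definition~\ref{def:well_behaved_opp_process_genealized}, i.e., the least negative derivative at lag $s$ must still sit below $\alpha$ times the most negative derivative at lag $s-1$. Condition 2 as literally written instead asserts $\partial_u^{-}(t+1)\geq\alpha\,\partial_u^{+}(t)$, which bounds the opposite pairing and points the wrong way: in the linear specialization $\partial_u^{\pm}(t)=g(t)$ it reads $g(t+1)\geq\alpha\,g(t)$, the reverse of the linear LPOP requirement $|g(t+1)|\geq\alpha|g(t)|$ (equivalently $g(t+1)\leq\alpha\,g(t)$ on the negative branch) that drives the corresponding step in Theorem~\ref{thm:max_greedy_opt}. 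Your plan therefore goes through only after reading condition 2 with the envelopes swapped, as $\partial_u^{+}(t+1)\leq\alpha\,\partial_u^{-}(t)$ for $t\geq\tau_0$; with that correction the case analysis is complete. One small further note: when $\tau_0=1$ only the lag $s=1$ falls into the ``free'' at-switch case, and every $s\geq2$ still relies on the (corrected) post-switch condition, so that boundary is not entirely cost-free.
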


\begin{proof} Consider a dosing sequence $u^\star_t$ that maintains $y_t^\star \geq y_\mathrm{min}$ for all $t$ with minimal cumulative dose. Assume it is not maximally greedy (otherwise we would be done). This means that there exists $t_{0}$ and $\epsilon > 0$ such that $u_{t_{0}}^\star - \epsilon$ would produce a new $y_{t_0+1}' \geq y_\mathrm{min}$. Consider the modified dosing schedule
$$u_t'= \begin{cases} u_{t_0}^\star - \epsilon,  t=t_0 \\ u_{t_0+1}^\star + \alpha \epsilon, t=t_0+1 \\ u_t^\star \text{ otherwise}\end{cases}$$

and let $y_t'$ be the resulting observations. Since $u_t' = u_t^\star$ for $t < t_0$, we have that $ y_t' = y_t^\star \geq y_\mathrm{min}$ up to $t_0$. Plugging in the effect of the dose modifications at $t_0$ and $t_0 + 1$ into the generalized dynamics equation \eqref{eq:dynamics_generalized}, by the Mean Value Theorem, we get:
\begin{align*}
y_{t+1}' &= y_{t+1}^\star - g(t-t_0, u_{t_0}^\star) + g(t-t_0, u_{t_0}^\star - \epsilon) \\
&\quad \quad - g(t-t_0 - 1, u_{t_0+1}^\star) + g(t-t_0 - 1, u_{t_0+1}^\star + \alpha \epsilon)\\
&= y_{t+1}^\star - \epsilon \cdot \partial_u g(t-t_0, u_{t_0}^\star - \delta_0 \epsilon) \\
& \quad \quad + \alpha \epsilon \cdot \partial_u g(t-t_0 - 1, u_{t_0+1}^\star + \delta_1 \epsilon)
\end{align*}
for some $\delta_0 \in (0,1) , \delta_1 \in (0, \alpha)$. Since $g$ is a G-LPOP, this final expression is greater than or equal to $y_{t+1}^\star$ and hence is also greater than or equal to $y_\mathrm{min}$.
Since this holds for all $t > t_0$, we have that $y_t'$ satisfies the constraints for all $t$. To finalize, note that $y_t'$ is achieved with smaller cumulative dose since
$$\sum_{t=1}^T u_t'=\sum_{t=1}^T u_t^\star - \epsilon + \alpha \epsilon < \sum_{t=1}^T u_t^\star,$$
This is a contradiction, completing the proof.
\end{proof}

\noindent Instantiating this in \eqref{eq:opt_ynat} can be done by solving for the dose which exactly matches the setpoint:

\begin{proposition}\label{thm:opt_clairvoyant_generalized} For any G-LPOP $g$, assume we know a lower bound $y_t^\mathrm{nat, lb}$ such that $y_t^\mathrm{nat, lb} \leq y_t^\mathrm{nat}$ for all t. Then, for any instance of $y_t^\mathrm{nat}$ that is greater than the prescribed lower bound, the dosing sequence which takes $u_t$ be the solution to
\begin{equation}\label{eq:opt_generalized}
g(0, u) = \max\left\{0, y_\mathrm{min} - y_{t+1}^\mathrm{nat, lb} - \sum_{k=1}^t g(k, u_{t-k})\right\}
\end{equation}
maintains $y_t\geq y_\mathrm{min}$ for all $t$ with minimal cumulative dose.
\end{proposition}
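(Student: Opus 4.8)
The plan is to mirror the proof of Proposition~\ref{thm:opt_ynat} almost line for line, with two substitutions: the explicit division by $g(0)$ is replaced by inverting the map $u \mapsto g(0,u)$, and the appeal to the linear optimality theorem is replaced by its nonlinear counterpart Theorem~\ref{thm:max_greedy_opt_generalized}. First I would check that \eqref{eq:opt_generalized} actually defines a dose. Since $0 < \tau_0$, Definition~\ref{def:opp_process_generalized} gives $\partial_u g(0,u) \geq \partial_u^{-}(0) \geq 0$ for all $u$, so $g(0,\cdot)$ is non-decreasing with $\inf_u g(0,u) \geq 0$. Hence for any nonnegative target value $c$ appearing on the right-hand side of \eqref{eq:opt_generalized} there is a smallest nonnegative $u$ solving $g(0,u) = c$, and that $u$ is itself a non-decreasing function of $c$. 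This monotone-inverse property is the nonlinear stand-in for ``dividing by $g(0)$'' and drives the feasibility argument.

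Next I would prove feasibility on every admissible natural progression. Fix any instantiation $\tilde{y}^{\mathrm{nat}} \geq y^{\mathrm{nat,lb}}$, and at each time $t_0$ define $\tilde{u}_{t_0}^{\mathrm{MED}}$ as the solution of \eqref{eq:opt_generalized} with $\tilde{y}_{t_0+1}^{\mathrm{nat}}$ in place of $y_{t_0+1}^{\mathrm{nat,lb}}$, using the prescribed past doses. Exactly as in Proposition~\ref{thm:opt_ynat}, substituting $\tilde{u}_{t_0}^{\mathrm{MED}}$ into the generalized dynamics \eqref{eq:dynamics_generalized} shows it either produces $\tilde{y}_{t_0+1} = y_\mathrm{min}$ or equals $0$, so it is the MED. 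Because the past-dose term $\sum_{k=1}^{t_0} g(k, u_{t_0-k})$ is identical in the two equations, their right-hand sides differ only through $y_{t_0+1}^{\mathrm{nat,lb}} \leq \tilde{y}_{t_0+1}^{\mathrm{nat}}$, so the target value used to compute $u_{t_0}$ is at least that used for $\tilde{u}_{t_0}^{\mathrm{MED}}$. By the monotone-inverse property $u_{t_0} \geq \tilde{u}_{t_0}^{\mathrm{MED}}$, whence $y_{t_0+1} \geq \tilde{y}_{t_0+1} \geq y_\mathrm{min}$. Since this holds at every $t_0$ regardless of past doses, the protocol keeps $y_t \geq y_\mathrm{min}$ for all $t$ on all admissible progressions.

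For minimality I would note that $u_t$ is \emph{exactly} the MED sequence for the worst-case progression matching $y^{\mathrm{nat,lb}}$ coordinate-wise. By Theorem~\ref{thm:max_greedy_opt_generalized}, this MED sequence has minimal cumulative dose among all sequences keeping $y_t \geq y_\mathrm{min}$ on that progression. Any rival protocol that remains feasible on every admissible progression is in particular feasible on this worst case, so it cannot have strictly smaller cumulative dose without contradicting Theorem~\ref{thm:max_greedy_opt_generalized}; hence the prescribed protocol is optimal.

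The hard part will be the well-definedness and boundary behavior of the implicit dose rather than the overall logic. Because $g(0,\cdot)$ is only guaranteed non-decreasing (not strictly increasing) with $\inf_u g(0,u) \geq 0$, I would commit to selecting the smallest nonnegative solution of $g(0,u)=c$ and verify that this selection collapses to $u = 0$ precisely when the bracket in \eqref{eq:opt_generalized} is non-positive, so the outer $\max\{0,\cdot\}$ clipping is consistent with the MED definition. I would also flag explicitly that, because the dose enters $g$ nonlinearly, the equality of the two past-dose sums must be justified by using the same prescribed doses in both expressions rather than by factoring the dose out as in the linear proof — a small point, but the one place where the nonlinearity could otherwise break the comparison.
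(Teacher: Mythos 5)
Your proof is correct and follows exactly the route the paper intends: the paper states Proposition~\ref{thm:opt_clairvoyant_generalized} without a written proof, indicating only that it follows by ``instantiating'' Theorem~\ref{thm:max_greedy_opt_generalized} the way Proposition~\ref{thm:opt_ynat} instantiates Theorem~\ref{thm:max_greedy_opt}, which is precisely the line-for-line adaptation you carry out (monotone inversion of $u\mapsto g(0,u)$ in place of division by $g(0)$, feasibility via comparison to the instance-wise MED, and minimality via the worst-case progression matching $y^{\mathrm{nat,lb}}$). Your extra care about well-definedness --- selecting the smallest nonnegative solution of $g(0,u)=c$ because Definition~\ref{def:opp_process_generalized} only guarantees $\partial_u^{-}(0)\geq 0$ rather than strict monotonicity, and noting that the two past-dose sums agree only because the same prescribed doses are substituted into both --- tightens points the paper glosses over with the bare assertion that ``$g(0,u)$ is increasing in $u$.''
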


Since $g(0, u)$ is increasing in $u$, we have the following guarantee:

\begin{remark} An $\epsilon$-accurate solution to (\ref{eq:opt_generalized}) can be computed in $\mathcal{O}(\log{\frac{1}{\epsilon}})$ steps.
\end{remark}

The above effectively shows that optimal tapering is equivalent to setpoint matching under non-linear $g$ as well. While it is not as clean to derive precise guarantees for the integral controller as in the main text, the result of Theorem~\ref{thm:max_greedy_opt_generalized}, combined with the robustness to $g(0)$ errors observed experimentally, strongly motivates the same approach in this more general setting.

\section{Additional Experiments}\label{app:exps}

\paragraph{Ablation over $g(0)$ range.}In Figure~\ref{fig:g0_range_ablation},  we plot the behavior of the integral protocol for the gains induced by different $g(0)$ lower/upper bounds against that of the optimal protocol and the non-adaptive baselines.\footnote{For the sake of readability we only plot the better baseline protocol for each opponent process (in black), i.e. exponential for $g_{A}/g_{B}/g_{C}$ and linear for $g_D$.} We see that the trends in Figure~\ref{fig:tapering_results} continue to hold over wider misspecification on the range of $g(0)$. In fact, the experiments suggest an overly conservative upper bound on $g(0)$ (leading to a smaller $K_-$) may improve performance in certain scenarios, and that setting $K_- = K_+$ may sometimes lead to higher constraint violation in the absence of padding. Overall, we observe that the protocol can be instantiated with surprisingly rough knowledge of the immediate effect on well-being of a medication without suffering significantly in terms of performance.

\begin{figure}[H]
\centering
\includegraphics[width=\linewidth]{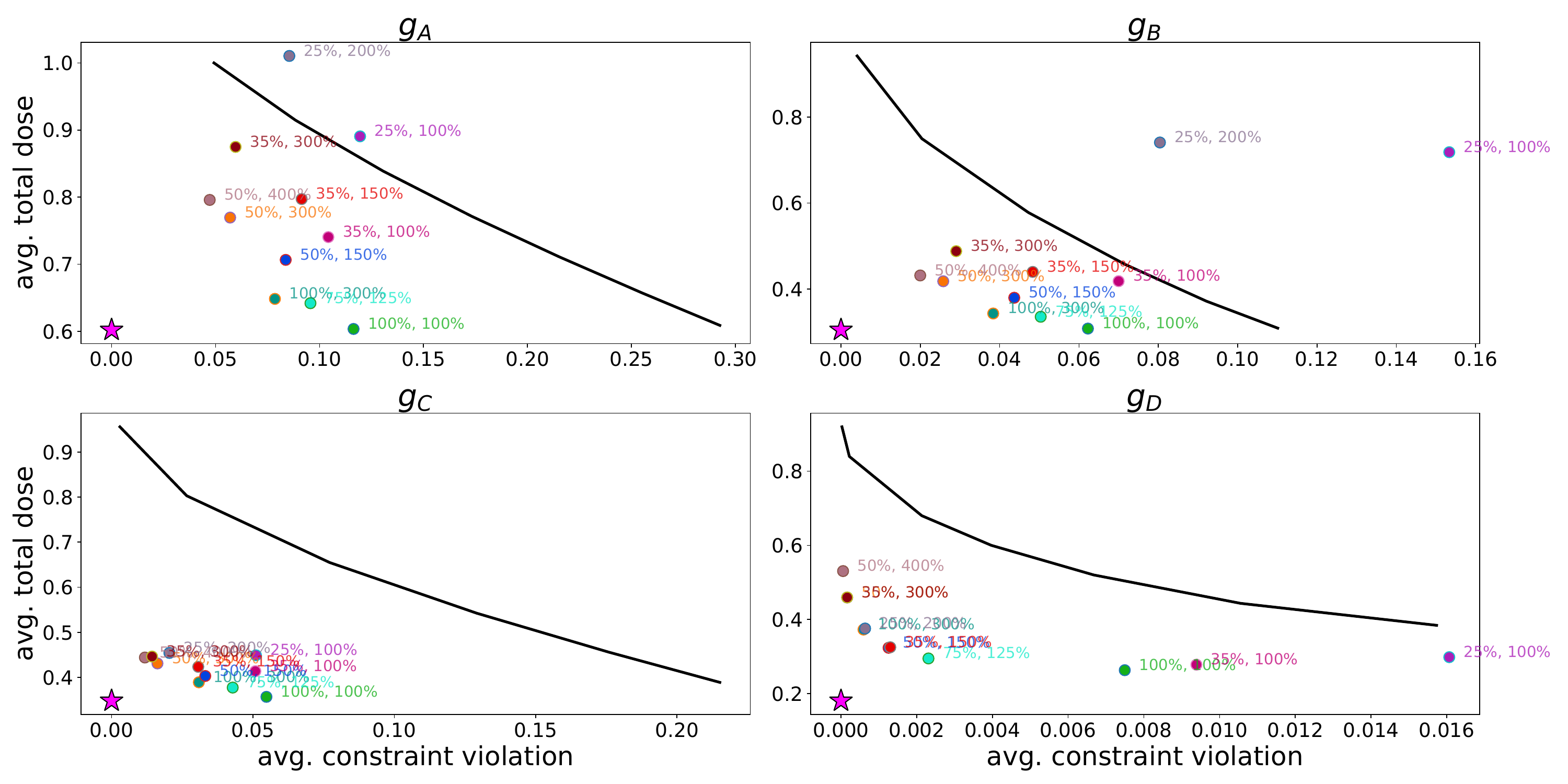}
\caption{Average cumulative constraint violation (x-axis) against average cumulative dose (y-axis) for different ranges of lower and upper bounds on $g(0)$. A label of $(p_1, p_2)$ in the legend corresponds to setting $K_-$ to $(p_1 g(0))^{-1}$ and $K_+$ to $(p_2g(0))^{-1}$.}
\label{fig:g0_range_ablation}
\end{figure}

\paragraph{Fraction of subjects fully tapered.}In Figure~\ref{fig:tapering_success}, we plot the fraction of units that can be fully tapered within $T_{\mathrm{taper}}$ steps using the integral protocol (instantiated with the gains and padding specifications from Section~\ref{sec:experiments}). We see that our method fully tapers a significant portion of the population within some pre-specified timeline with minimal constraint violation. This is in contrast to the linear and exponential protocols which induce a $0\%$ vs. $100\%$ tapering success step function given a time frame $T_\mathrm{taper}$. Across all settings considered, the baseline linear/exponential protocols would have to incur significantly larger average constraint violation (\textit{well} beyond the range covered by our plot) to fully taper individuals within the desired time frame. Finally, we again see a gap between the tapering success of the optimal and the integral tapering schemes, and that one way to address this gap is by using negative padding to allow more constraint violation (that however is still significantly smaller than that required for full tapering via non-adaptive methods).

\begin{figure}[H]
\centering
\includegraphics[width=\linewidth]{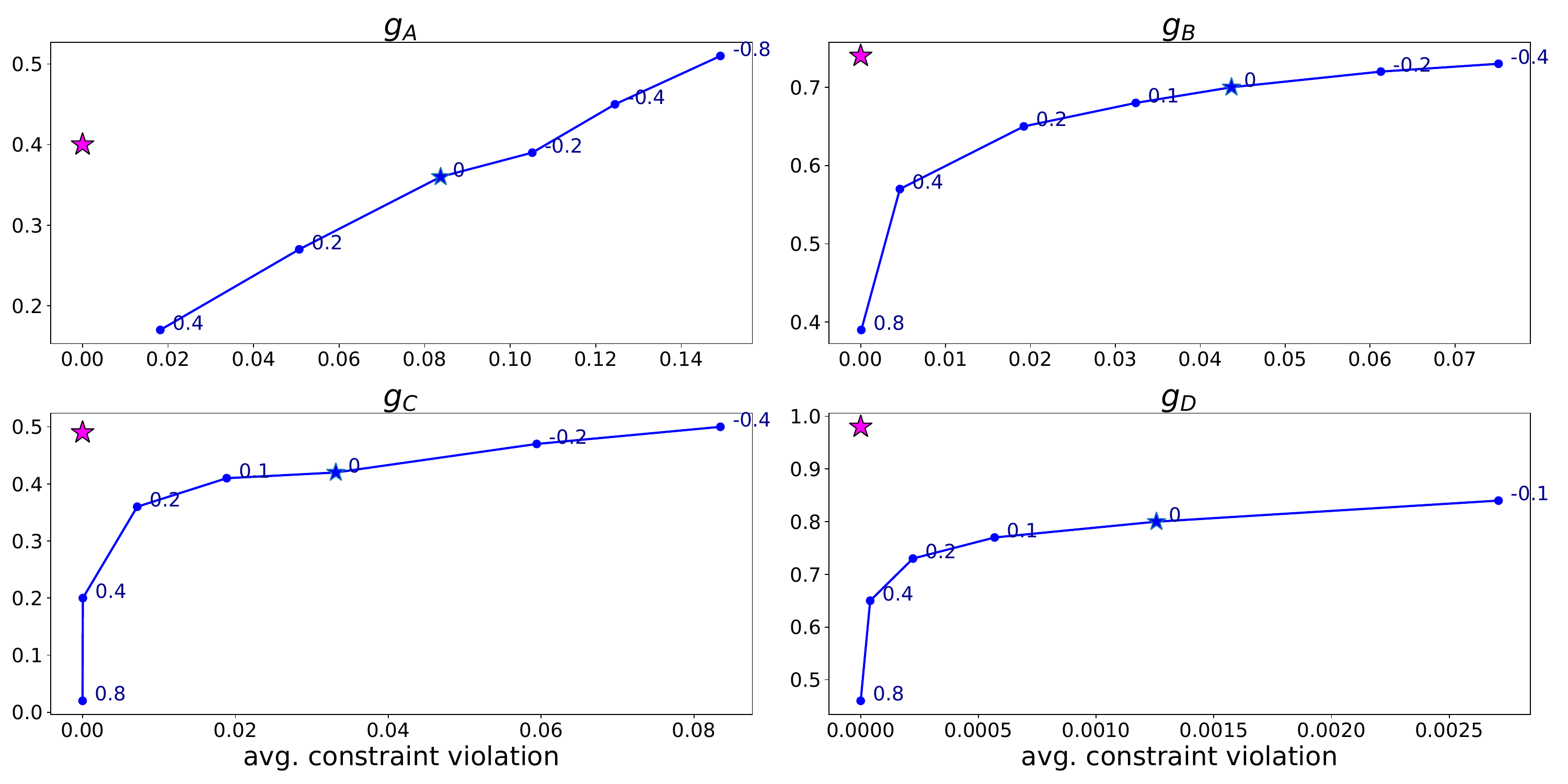}
\caption{Average constraint violation (x-axis) against the fraction of units fully tapered (y-axis) for the integral (\textcolor{blue}{blue}) and optimal (\textcolor{magenta}{$\star$}) protocols.}
\label{fig:tapering_success}
\end{figure}

\end{document}